\newcommand{\norm}[1]{\left\Vert#1\right\Vert}
\newcommand{\abs}[1]{\left\vert#1\right\vert}
\newcommand{\inp}[1]{\left\langle#1\right\rangle}
\newcommand{\argmin}{\operatornamewithlimits{arg\,min}}
\newcommand{\argmax}{\operatornamewithlimits{arg\,max}}
\def\d{\mathrm{d}}
\def\cl{\mathrm{cl}}
\def\bd{\mathrm{bd}}
\def\prob{\mathrm{Prob}}
\newtheorem{theorem}{Theorem}[section]
\newtheorem{corollary}{Corollary}[section]
\newtheorem{lemma}{Lemma}[section]
\newtheorem{proposition}{Proposition}[section]
\newtheorem{definition}{Definition}[section]
\newtheorem{example}{Example}[section]
\newtheorem{remark}{Remark}[section]
\author{
    Jie Jiang
    \footnotemark[2]
    \and
    Xiaojun Chen
    \footnotemark[3]
    \footnotemark[1]
    }
\title{Optimality conditions for nonsmooth nonconvex-nonconcave min-max problems and generative adversarial networks}
\date{}
\begin{document}
\footnotetext[1]{Corresponding Author. This work is supported by The Hong Kong Polytechnic University Post-doctoral fellow scheme and The Hong Kong Grant Council Grant PolyU15300120.}
\footnotetext[2]{CAS AMSS-PolyU Joint Laboratory of Applied Mathematics, The Hong Kong Polytechnic University, Hong Kong, China, jiangjiecq@163.com}
\footnotetext[3]{Department of Applied Mathematics, The Hong Kong Polytechnic University, Hong Kong, China, xiaojun.chen@polyu.edu.hk}
\maketitle

\begin{abstract}
This paper considers a class of nonsmooth nonconvex-nonconcave min-max problems in machine learning and games. We first provide sufficient conditions for the existence of global minimax points and local minimax points.   Next, we establish the first-order and second-order optimality conditions for local minimax points by using directional derivatives. These conditions reduce to smooth min-max problems with Fr{\'e}chet derivatives.  We apply our theoretical results to generative adversarial networks (GANs) in which two neural networks contest with each other in a game. Examples are used to illustrate applications of the new theory for training GANs.
\end{abstract}

\noindent\textbf{Keywords:} min-max problem \quad nonsmooth \quad nonconvex-nonconcave \quad optimality condition \quad generative adversarial networks \newline
\noindent\textbf{MSC 2020:}\quad 90C47, 90C15, 90C33, 65K15 \newline

\section{Introduction}

Consider the following min-max problem
\begin{equation}
\label{minmax}
\min_{x\in X} \max_{y\in Y}~  f(x,y),
\end{equation}
where $X\subseteq \mathbb{R}^n$ and $Y\subseteq \mathbb{R}^m$ are nonempty, closed and convex sets, $f:\mathbb{R}^{n}\times \mathbb{R}^{m}\to \mathbb{R}$ is a locally Lipschitz continuous function. Define an envelope function
$$\varphi(x):=\max_{y\in Y}f(x,y).$$
In this paper, we assume that $\varphi(x)$ is finite-valued for any $x\in X$.
We say problem \eqref{minmax} is nonconvex-nonconcave if for a fixed $x\in X$, $f(x,\cdot)$ is not concave, and for a fixed $y\in Y$, $f(\cdot,y)$ is not convex.

The min-max problem \eqref{minmax} has many applications in machine learning and games \cite{JC2021pure,Qian2019Robust,ZHWZ2020generalization}, for instance, the popular generative adversarial networks (GANs) in machine learning  \cite{ACB2017wasserstein,C2018generative,G2016nips,G2014generative,M2021statistical}.
Let $D:\mathbb{R}^m\times \mathbb{R}^{s_1} \to (0,1)$ be a parameterized discriminator, $G:\mathbb{R}^n\times \mathbb{R}^{s_2} \to \mathbb{R}^{s_1}$ be a parameterized generator and $\xi_i$ be a $s_i$-valued random vector with probability distribution $P_i$ and support $\Xi_i\subseteq \mathbb{R}^{s_i}$ for each $i=1,2$. Then the plain vanilla GAN model can be formulated as
\begin{equation}
\label{GAN}
\min_{x \in X} \max_{y\in Y}~  \mathbb{E}_{P_{1}}\Big[\log\big(D(y,\xi_1)\big)\Big] +  \mathbb{E}_{P_2}\Big[\log\Big( 1 - D\big(y,G(x,\xi_2)\big)\Big)\Big],
\end{equation}
where $x$ and $y$ are the parameters to control $D$ and $G$ with ranges $X$ and $Y$, respectively. Here $\mathbb{E}_{P_i}[\cdot]$ denotes the expected value  with probability distribution $P_i$ over $\Xi_i$ for $i=1,2$. We assume that the expected values are finite for any fixed $x\in X$ and $y\in Y$.  Since the range of $D$ is $(0,1)$, for any fixed $x$,
 $$\varphi(x)=\max_{y\in Y}~  \mathbb{E}_{P_{1}}\Big[\log\big(D(y,\xi_1)\big)\Big] +  \mathbb{E}_{P_2}\Big[\log\Big( 1 - D\big(y,G(x,\xi_2)\big)\Big)\Big]$$
is real-valued.  The functions $D$ and $G$ are usually defined by deep neural networks (see Section \ref{Sec4} for a specific example).

Since the pioneering work \cite{N1928theorie} by Von Neumann in 1928, convex-concave min-max problems have been investigated extensively, based on the concept of saddle points (see e.g. \cite{CLO2014optimal,N2004prox,ZHWZ2020generalization,ZHZ2019lower} and the references therein).
In the recent years, driving by important applications, nonconvex-nonconcave min-max problems have attracted considerable attention  \cite{JNJ2020local,LLRY2018solving,LTH2019block,RLLY2021weakly}.
However, it is well-known that a nonconvex-nonconcave min-max problem may not have a saddle point.
How to properly define its local optimal points and optimality conditions has been of great concern. In \cite{ADLH2019local,DP2018limit,MRS2020gradient},  the concept of local saddle points was studied, but it is pointed out in \cite{JNJ2020local} that the concept of local saddle points is not suitable
for most applications of min-max optimization in machine learning.
A nonconvex-nonconcave min-max problem may not have a local saddle point (see Example \ref{Eg1} in this paper).
In \cite{JNJ2020local}, the authors argued that a local solution cannot be determined just based on the function value in an arbitrary small neighborhood of a given point. For that reason, they proposed the concept of local minimax points of unconstrained smooth nonconvex-nonconcave min-max problems and studied the first-order and second-order optimality conditions.

The main contributions of this paper can be summarized as follows.
\begin{itemize}

\item We define the first-order and second-order optimality conditions of local minimax points of constrained min-max problem (\ref{minmax}) by using directional derivatives. These conditions reduce to smooth min-max problems with Fr{\'e}chet derivatives. Our optimality conditions extend the work \cite{JNJ2020local} for unconstrained smooth min-max problems to constrained nonsmooth min-max problems. Moreover, we rigorously describe the relationship between saddle points, local saddle points, global minimax points, local minimax points and stationary points defined by these first-order and second-order optimality conditions. The relationship among these points is illustrated by interesting examples and summarized in Figure \ref{fig-3}.

\item We establish new mathematical optimization theory for the GAN model with both smooth and nonsmooth activation functions. In particular, we give new properties of global minimax points, local minimax points and stationary points of problem (\ref{GAN}) under some specific settings. Examples with the sample average approximation approach  show that our results are helpful and efficient for training GANs.
\end{itemize}

The reminder of the paper is organized as follows.
In section \ref{Sec2}, we give some notations and preliminaries. In section \ref{Sec3}, we study the first-order and second-order optimality conditions of nonsmooth and smooth min-max problems, respectively. In section \ref{Sec4}, we apply our results to GANs and use examples to show the effectiveness of our results. Finally, we make some concluding remarks in section \ref{Sec5}.

\section{Notations and preliminaries}\label{Sec2}

In this paper, $\mathbb{N}$ denotes the natural numbers. $\mathbb{R}_+^n$ denotes the nonnegative part of $\mathbb{R}^n$. $\norm{\cdot}$ denotes the Euclidean norm. $\cl(\Omega)$, $\mathrm{int} (\Omega)$ and $\bd (\Omega)$ denote the closure, the interior and the boundary of set $\Omega$, respectively.  $o(|t|)$ denotes the infinitesimal of a higher order than $|t|$ as $t\to 0$. $O(|t|)$ denotes the same order as $|t|$ as $t\to 0$.
$\mathbb{B}(x,r)$ denotes the closed ball centred at $x$ with radius $r>0$.

Let $\Omega\subseteq \mathbb{R}^n$ be a closed and convex set. The tangent cone \cite[Definition 6.1]{RW2009variational} to $\Omega$ at $x\in \Omega$, denoted by $\mathcal{T}_\Omega(x)$, is defined as
\begin{align*}
\mathcal{T}_\Omega(x) &= \left\{ w: \exists~ x^k\overset{\Omega}{\to} x, t^k\downarrow 0 ~ \text{such that}~ \lim_{ k\to \infty } \frac{x^k-x}{t^k} = w \right\}.
\end{align*}

The normal cone \cite[Definition 6.3]{RW2009variational} to $\Omega$ at $x\in \Omega$, denoted by $\mathcal{N}_\Omega(x)$, is
$$\mathcal{N}_\Omega(x):=\{y\in \mathbb{R}^n: \inp{y, \omega-x}\leq 0, \forall \omega \in \Omega \}.$$
It also knows from \cite[Proposition 6.5]{RW2009variational} that $\mathcal{N}_\Omega(x)=\{v: \inp{v,\omega}\leq 0,~\text{for}~ \forall \omega\in \mathcal{T}_\Omega(x) \}$.

\begin{definition}\label{Def1}
We say that $(\hat{x},\hat{y})\in X\times Y$ is a saddle point of problem \eqref{minmax}, if
\begin{equation}
\label{saddp}
f(\hat{x},y) \leq f(\hat{x},\hat{y}) \leq  f(x,\hat{y})
\end{equation}
holds for any $(x,y)\in X\times Y$.
\end{definition}

\begin{definition}\label{Def2}
We say that $(\hat{x},\hat{y})\in X\times Y$ is a local saddle point of problem \eqref{minmax}, if there exists a $\delta>0$ such that, for any $(x,y)\in X\times Y$ satisfying $\norm{x-\hat{x}}\leq \delta$ and $\norm{y-\hat{y}}\leq \delta$, \eqref{saddp} holds.
\end{definition}

In the convex-concave setting, saddle points are usually used to describe the optimality of min-max problems.  However, one significant drawback of considering (local) saddle points of nonconvex-nonconcave problems is that such points might not exist \cite[Proposition 6]{JNJ2020local}. Also, (local) saddle points correspond to simultaneous game, but many applications (such as GANs and adversarial training) correspond to sequential games. In view of this, we consider in what follows global and local minimax points proposed in \cite{JNJ2020local}, which are from the viewpoint of sequential games.

\begin{definition}
\label{Def3}
We say that $(\hat{x},\hat{y})\in X\times Y$ is a global minimax point of problem \eqref{minmax}, if
$$f(\hat{x},y) \leq f(\hat{x},\hat{y}) \leq \max_{y'\in Y}f(x,y')$$
holds for any $(x,y)\in X\times Y$.
\end{definition}

\begin{definition}
\label{Def4}
We say that $(\hat{x},\hat{y})\in X\times Y$ is a local minimax point of problem \eqref{minmax}, if there exist a $\delta_0>0$ and a function $\tau:\mathbb{R}_+\rightarrow \mathbb{R}_+$ satisfying $\tau(\delta)\rightarrow 0$ as $\delta\rightarrow 0$, such that for any $\delta\in (0,\delta_0]$ and any $(x,y)\in X\times Y$ satisfying $\norm{x-\hat{x}}\leq \delta$ and $\norm{y-\hat{y}}\leq \delta$, we have
$$f(\hat{x},y) \leq f(\hat{x},\hat{y}) \leq \max_{y'\in\{y\in Y: \norm{y-\hat{y}}\leq \tau(\delta)\}}f(x,y').$$
\end{definition}

\begin{remark}
It is noteworthy that the function $\tau$ in Definition \ref{Def4} can be further restricted to be monotone or continuous without changing Definition \ref{Def4} \cite[Remark 15]{JNJ2020local}. Hereafter, we always assume that $\tau$ is monotone and continuous.

Global or local minimax points are motivated by many practical applications and the probable nonconvexity-nonconcavity of the min-max problem. Obviously, a saddle point is a global minimax point and a local saddle point is a local minimax point. However, problem (\ref{minmax}) may not have a local saddle point.    The following proposition gives some sufficient conditions for the existence of global (local) minimax points.
Note that the existence of a global (local) minimax point does not imply the existence of a local saddle point.
\end{remark}

\begin{proposition}\label{Prop1}
\begin{enumerate}
\item[(i)] If $\Phi_u:=\{x\in X: \varphi(x) \leq u\}$ is nonempty and bounded for some scalar $u$ and $\{y\in Y: f(x,y) \geq l_x\}$ is bounded for every $x\in \Phi_u$ and some scalar $l_x$, then problem \eqref{minmax} has at least a global minimax point.

\item[(ii)] (\cite[Lemma 16]{JNJ2020local}) $(x^*,y^*)\in X\times Y$ is a local minimax point if and only if $y^*$ is a local maximizer of $f(x^*,\cdot)$ and there exists a $\delta_0>0$ such that $x^*$ is a local minimizer of $\varphi_\delta(x):=\max_{y'\in\{y\in Y:\norm{y-y^*}\leq \delta\}} f(x,y')$ for any $\delta\in(0,\delta_0]$.
\end{enumerate}
\end{proposition}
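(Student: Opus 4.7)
\medskip
\noindent\textbf{Proof proposal for Proposition \ref{Prop1}.}

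Part (ii) is quoted verbatim from \cite[Lemma 16]{JNJ2020local}, so my effort focuses on part (i). The plan is to construct a global minimax point as $(\hat x,\hat y)$ where $\hat x$ minimizes $\varphi$ over $X$ and $\hat y$ maximizes $f(\hat x,\cdot)$ over $Y$, and then to read off Definition \ref{Def3} directly. The argument splits naturally into three steps: (a) the inner maximum is attained, (b) $\varphi$ is lower semicontinuous, (c) the outer minimum is attained.

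For step (a), fix $x\in X$. Since $\varphi(x)$ is finite by assumption, for any $\ell<\varphi(x)$ the upper level set $L_\ell:=\{y\in Y: f(x,y)\ge \ell\}$ is nonempty, and by choosing $\ell$ not larger than the given scalar $l_x$ I can ensure $L_\ell$ lies inside the bounded set in the hypothesis. Local Lipschitz continuity of $f$ yields continuity of $f(x,\cdot)$, so $L_\ell$ is closed in the closed set $Y$, hence compact; Weierstrass then gives some $y(x)\in Y$ with $f(x,y(x))=\varphi(x)$. For step (b), given $x^k\to x$ in $X$, pick $y^*\in Y$ with $\varphi(x)=f(x,y^*)$; then $\varphi(x^k)\ge f(x^k,y^*)\to f(x,y^*)=\varphi(x)$ by joint continuity of $f$, so $\liminf_{k\to\infty}\varphi(x^k)\ge\varphi(x)$.

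For step (c), lower semicontinuity of $\varphi$ together with closedness of $X$ makes $\Phi_u$ closed, and by hypothesis $\Phi_u$ is nonempty and bounded, hence compact. Therefore $\varphi$ attains its minimum over $\Phi_u$ at some $\hat x$, and since any $x\in X\setminus \Phi_u$ has $\varphi(x)>u\ge \varphi(\hat x)$, this $\hat x$ is in fact a minimizer of $\varphi$ over all of $X$. Apply step (a) at $x=\hat x$ to select $\hat y\in\argmax_{y\in Y} f(\hat x,y)$. Then for every $(x,y)\in X\times Y$ we have $f(\hat x,y)\le f(\hat x,\hat y)=\varphi(\hat x)\le \varphi(x)=\max_{y'\in Y}f(x,y')$, which is precisely the condition of Definition \ref{Def3}.

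I do not expect any serious obstacle. The only subtle point is ensuring that the hypothesis on the $y$-level sets really implies attainment of the inner max (rather than merely finiteness of the sup); this is why step (a) spends a moment choosing $\ell$ strictly below $\varphi(x)$ to guarantee nonemptiness before invoking compactness. Everything else is standard lower-semicontinuity and Weierstrass reasoning, and no second-order or variational machinery is needed here.
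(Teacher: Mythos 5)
Your proof of part~(i) is correct and follows essentially the same route as the paper's: lower semicontinuity of $\varphi$ plus the nonempty bounded level set $\Phi_u$ to attain the outer minimum (the paper cites \cite[Theorem 1.9]{RW2009variational} for this step), the bounded $y$-level set plus continuity to attain the inner maximum, and then a direct check of Definition~\ref{Def3}; you merely spell out the compactness details the paper leaves implicit. Two tiny slips, neither fatal: in step~(a) you need $\ell$ \emph{not smaller} than $l_x$ (so that $L_\ell\subseteq L_{l_x}$ is bounded) while still $\ell<\varphi(x)$ (so that $L_\ell$ is nonempty) --- your containment direction is reversed, and like the paper you tacitly assume $l_x<\varphi(x)$; and in step~(b) you pick an exact maximizer $y^*$ at an arbitrary $x\in X$, which the hypotheses only guarantee for $x\in\Phi_u$ --- replace it by an $\epsilon$-approximate maximizer, or simply observe that a pointwise supremum of continuous functions is lower semicontinuous.
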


\begin{proof}
(i) According to the continuity of $f(x,y)$, $\varphi$ is lower semicontinuous. We know from \cite[Theorem 1.9]{RW2009variational} that $\argmin_{x\in X} \varphi(x)\subseteq \Phi_u$ is nonempty and compact. Let $x^*\in \argmin_{x\in X} \varphi(x)$ and consider the set $\argmax_{y\in Y} f(x^*,y)$. Since $\{y\in Y: f(x^*,y) \geq l_{x^*}\}$ is bounded, we know from the continuity of $f(x^*,\cdot)$ that the maximum can be achieved. Let $y^*\in \argmax_{y\in Y} f(x^*,y)$. It is easy to check that $(x^*,y^*)$ is a global minimax point.
\end{proof}

Specifically, if both $X$ and $Y$ are bounded, then all conditions in (i) of Proposition \ref{Prop1} hold. Thus problem \eqref{minmax} has a global minimax point. However, a local minimax point may not exist even $X$ and $Y$ are bounded (see Example \ref{Eg2}). Also, a global minimax point may not be a local minimax point (see Example \ref{Eg2}). The following example tells that the global and local minimax points exist but (local) saddle points do not.

\begin{example}[{\cite[Figure 1]{JNJ2020local}}]\label{Eg1}
Let $n=m=1$ and $X = Y = [-1,1]$. Consider $f(x,y)=-x^2 + 5xy -y^2$. Note that
\begin{equation*}
\varphi(x)= \max_{y\in[-1,1]} (-x^2+5xy-y^2) =
\begin{cases}
-x^2 - 5x -1 , & x\in \left[-1, -\frac{2}{5}\right];\\
\frac{21}{4}x^2, & x\in \left[-\frac{2}{5}, \frac{2}{5}\right];\\
-x^2 + 5x - 1, & x\in \left[\frac{2}{5}, 1\right].\\
\end{cases}
\end{equation*}
It is not difficult to examine that $\min_{x\in [-1,1]} \varphi(x) = 0$ when $x=0$. In this case, $y=0$. Therefore, $(0,0)$ is a global minimax point. Moreover, let $\delta_0=\frac{2}{5}$ and $\tau(\delta)=\frac{5}{2}\delta$ in Definition \ref{Def4}. Then for any $\delta\leq \delta_0$, $(x,y)\in [-1,1]\times [-1,1]$ satisfying $\abs{x}\leq \delta$ and $\abs{y}\leq \delta$, we have
$$\max_{y'\in\{y\in Y: \abs{y}\leq \frac{5}{2}\delta\}}f(x,y') = \frac{21}{4}x^2$$
when $y= \frac{5}{2}x$. Thus, we obtain
$$-y^2=f(0,y) \leq f(0,0)=0 \leq \max_{y'\in\{y\in Y: \abs{y}\leq \frac{5}{2}\delta\}}f(x,y') = \frac{21}{4}x^2,$$
which implies that $(0,0)$ is also a local minimax point.

Note that the solutions of $\max_{y\in[-\delta,\delta]}\min_{x\in[-\delta,\delta]} f(x,y)$ are $(\delta,0)$ and $(-\delta,0)$ for any $\delta\in(0,1]$. Thus, we have
\begin{equation}
\label{gs5}
\max_{y\in[-\delta,\delta]}\min_{x\in[-\delta,\delta]} f(x,y) = -\delta^2 \neq 0 = \min_{x\in[-\delta,\delta]}\max_{y\in[-\delta,\delta]} f(x,y),
\end{equation}
which implies that $(0,0)$ is neither a saddle point (i.e., \eqref{gs5} holds with $\delta=1$) nor a local saddle point (i.e., \eqref{gs5} holds with sufficiently small $\delta$).

This example tells us (local) saddle points are inadequate to characterize the optimality of nonconvex-nonconcave min-max problems. Global and local minimax points defined in Definitions \ref{Def3} and \ref{Def4} respectively are good supplements of (local) saddle points.
\end{example}

\section{Optimality and stationarity}\label{Sec3}

In this section, we first discuss the first-order and second-order  optimality conditions when $f$ in \eqref{minmax} is nonsmooth. The smooth case is considered as a special case of the nonsmooth ones when the directional derivatives can be represented by Fr\'echet derivatives. The results extend  the study of necessary optimality conditions of min-max problems in \cite{JNJ2020local}, where the unconstrained smooth min-max problems are addressed. We illustrate these theoretical results by three examples.

To proceed further, we give the description of tangents to convex sets.

\begin{lemma}[{\cite[Theorem 6.9]{RW2009variational}}]\label{Lem3}
If $\Omega\subseteq \mathbb{R}^n$ is convex and $\bar{x}\in \Omega$, then
\begin{align*}
\mathcal{T}_{\Omega}(\bar{x})=\cl\{w: \exists \lambda >0~\text{with}~ \bar{x} + \lambda w \in \Omega\}, \,
\mathrm{int}\left(\mathcal{T}_{\Omega}(\bar{x}) \right)=\{w: \exists \lambda >0~\text{with}~ \bar{x} + \lambda w \in \mathrm{int}(\Omega)\}.
\end{align*}
\end{lemma}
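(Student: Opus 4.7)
The plan is to establish both identities by exploiting the convexity of $\Omega$ to replace the sequential tangent-cone definition with the pointwise feasibility condition. Write $K(\bar{x}) := \{w : \exists\, \lambda > 0,\ \bar{x} + \lambda w \in \Omega\}$ and $K^{\circ}(\bar{x}) := \{w : \exists\, \lambda > 0,\ \bar{x} + \lambda w \in \mathrm{int}(\Omega)\}$. The recurring convexity fact I will rely on is that whenever $\bar{x} + \lambda w \in \Omega$ for some $\lambda > 0$, the whole segment $\bar{x} + t w$ for $t \in [0,\lambda]$ stays in $\Omega$, since each of its points is a convex combination of $\bar{x}$ and $\bar{x} + \lambda w$; in particular, shrinking the step size preserves feasibility, which makes $K(\bar{x})$ a convex cone.

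For the first identity, the inclusion $K(\bar{x}) \subseteq \mathcal{T}_\Omega(\bar{x})$ is immediate: given $w$ with witness $\lambda$, pick $t^k \downarrow 0$ with $t^k \le \lambda$ and set $x^k := \bar{x} + t^k w \in \Omega$, so $(x^k - \bar{x})/t^k = w$ for all $k$. Since $\mathcal{T}_\Omega(\bar{x})$ is closed (as the set of limits of difference quotients along sequences in $\Omega$), this yields $\cl K(\bar{x}) \subseteq \mathcal{T}_\Omega(\bar{x})$. Conversely, if $w \in \mathcal{T}_\Omega(\bar{x})$ with witnesses $x^k \in \Omega$ and $t^k \downarrow 0$, then $w^k := (x^k - \bar{x})/t^k$ lies in $K(\bar{x})$ (because $\bar{x} + t^k w^k = x^k \in \Omega$) and converges to $w$, giving $w \in \cl K(\bar{x})$.

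For the second identity, the inclusion $K^{\circ}(\bar{x}) \subseteq \mathrm{int}(\mathcal{T}_\Omega(\bar{x}))$ is a straightforward rescaling: from $\mathbb{B}(\bar{x} + \lambda w, r) \subseteq \Omega$ one deduces $\mathbb{B}(w, r/\lambda) \subseteq K(\bar{x}) \subseteq \mathcal{T}_\Omega(\bar{x})$. The reverse inclusion is the main obstacle, and my plan is to argue in two stages. First, invoke the standard convex-analysis identity $\mathrm{int}(\cl C) = \mathrm{int}(C)$ for any convex set $C$ with nonempty interior, applied to the convex cone $K(\bar{x})$, to obtain $\mathrm{int}(\mathcal{T}_\Omega(\bar{x})) = \mathrm{int}(K(\bar{x}))$. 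Second, given $w \in \mathrm{int}(K(\bar{x}))$ with $\mathbb{B}(w, r) \subseteq K(\bar{x})$, pick the standard basis $\{e_1,\ldots,e_n\}$ and feasible witnesses $\lambda_i^\pm > 0$ for the $2n$ directions $w \pm (r/2)e_i$; shrinking to the common value $\lambda^\ast := \min_{i,\pm} \lambda_i^\pm$, all points $\bar{x} + \lambda^\ast(w \pm (r/2)e_i)$ lie in $\Omega$, and $\bar{x} + \lambda^\ast w$ is in the topological interior of their convex hull, hence in $\mathrm{int}(\Omega)$, so $w \in K^{\circ}(\bar{x})$. The degenerate case where $\mathrm{int}(\Omega) = \emptyset$ must be checked separately: then $\Omega$ sits in a proper affine subspace, whence $\mathcal{T}_\Omega(\bar{x})$ sits in a proper linear subspace and both sides of the second identity are empty, so the equality still holds.
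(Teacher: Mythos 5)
Your proof is correct. The paper itself gives no argument here: Lemma \ref{Lem3} is quoted verbatim from Rockafellar--Wets (Theorem 6.9), so there is no ``paper proof'' to compare against, and what you have produced is a self-contained elementary derivation of that cited result. Your treatment of the first identity (shrinking the step along the segment to get $K(\bar{x})\subseteq\mathcal{T}_\Omega(\bar{x})$, closedness of the tangent cone for one inclusion, and reading off $w^k=(x^k-\bar{x})/t^k\in K(\bar{x})$ for the other) is exactly the standard route, and the second identity via $\mathrm{int}(\cl C)=\mathrm{int}(C)$ for convex $C$ plus the cross-polytope $\mathrm{conv}\{w\pm(r/2)e_i\}$ is a clean way to pass from interiority of the cone to interiority of $\Omega$ at the point $\bar{x}+\lambda^\ast w$. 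Two small points you should make explicit if you write this up: (a) to invoke $\mathrm{int}(\cl K(\bar{x}))=\mathrm{int}(K(\bar{x}))$ under your stated hypothesis you need $\mathrm{int}(K(\bar{x}))\neq\emptyset$ in the non-degenerate case, which follows in one line from your own rescaling computation applied to any $z\in\mathrm{int}(\Omega)$ (or note that the identity $\mathrm{int}(\cl C)=\mathrm{int}(C)$ in fact holds for every convex subset of $\mathbb{R}^n$ with no interiority hypothesis); and (b) the closedness of $\mathcal{T}_\Omega(\bar{x})$ deserves its one-sentence diagonal argument rather than a parenthetical, since the set of limits of difference quotients is not closed for arbitrary sets by definition alone. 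Neither point is a gap in substance.
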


Denote
$$\mathcal{T}_{\Omega}^\circ(\bar{x}):=\{w: \exists \lambda >0~\text{with}~ \bar{x} + \lambda w \in \Omega \}.$$
 It is not difficult to verify that $\mathcal{T}_{\Omega}(\bar{x})$, $\mathrm{int}(\mathcal{T}_{\Omega}(\bar{x}))$ and $\mathcal{T}_{\Omega}^\circ(\bar{x})$ are convex cones if $\Omega$ is convex. Moreover, we have the following relationship
\begin{equation*}
\mathrm{int}\left(\mathcal{T}_{\Omega}(\bar{x}) \right) \subseteq \mathcal{T}_{\Omega}^\circ(\bar{x}) \subseteq \mathcal{T}_{\Omega}(\bar{x}).
\end{equation*}
If $\Omega$ is polyhedral, then $\mathcal{T}_{\Omega}^\circ(\bar{x}) = \mathcal{T}_{\Omega}(\bar{x})$.

\subsection{Nonsmooth case}

In this subsection, we give some necessary optimality conditions of local minimax points of problem \eqref{minmax} for the case that $f$ is not differentiable, which lead to definitions of the first-order and second-order d-stationary points. For this purpose, we introduce some definitions for nonsmooth analysis.

Let $g:\mathbb{R}^n \to \mathbb{R}$. The (first-order) \emph{subderivative} $\d g(x)(v)$ at $x\in \mathbb{R}^n$ for $v\in\mathbb{R}^n$ is defined as \cite[Definition 8.1]{RW2009variational}
$$\d g(x)(v) :=\liminf_{v'\to v, t\downarrow 0} \frac{g(x+ t v') - g(x)}{t}.$$
The function $g$ is \emph{semidifferentiable} at $x$ for $v$ \cite[Definition 7.20]{RW2009variational} if the (possibly infinite) limit
$$\lim_{v'\to v, t\downarrow 0} \frac{g(x+ t v') - g(x)}{t}$$
exists. Further, if the above limit exists for every $v\in \mathbb{R}^n$, we say that $g$ is semidifferentiable at $x$. It is easy to see that if $g$ is Lipschitz continuous in a neighborhood of $x$, then this limit is finite.

There are two types of second-order subderivatives \cite[Definition 13.3]{RW2009variational}. The second-order subderivative at $x\in \mathbb{R}^n$ for $w$ and $v$ is
$$\d^2g(x|v)(w):=\liminf_{w'\to w, t\downarrow 0} \frac{g(x+ t w') - g(x) - t \inp{v,w'}}{\frac{1}{2}t^2}.$$
The second-order subderivative at $x\in \mathbb{R}^n$ for $w$ (without mention of $v$) is
$$\d^2g(x)(w):=\liminf_{w'\to w, t\downarrow 0} \frac{g(x+ t w') - g(x) - t \d g(x)(w')}{\frac{1}{2}t^2}.$$

We say that $g$ is \emph{twice semidifferentiable} at $x$ if it is semidifferentiable at $x$ and the (possibly infinite) limit
$$\lim_{w'\to w, t\downarrow 0} \frac{g(x+ t w') - g(x) - t \d g(x)(w')}{\frac{1}{2}t^2}$$
exists for any $w\in \mathbb{R}^n$.

The one-side \emph{directional derivative} $g'(x;v)$ at $x\in \mathbb{R}^n$ along the direction $v\in\mathbb{R}^n$ is defined as
$$g'(x;v):=\lim_{t\downarrow 0} \frac{g(x+ t v) - g(x)}{t}.$$
The function $g$ is directionally differentiable at $x$ if $g'(x;v)$ exists for all directions $v\in\mathbb{R}^n$. If $g$ is locally Lipschitz continuous near $x$, then semidifferentiability at $x$ is equivalent to directional differentiability at $x$.

The \emph{second-order directional derivative} of $g$ at $x\in \mathbb{R}^n$ along the direction $v\in\mathbb{R}^n$  is defined as \cite[Chapter 13.B]{RW2009variational}
$$g^{(2)}(x;v):=\lim_{t\downarrow 0} \frac{g(x+ t v) - g(x) - t g'(x;v)}{\frac{1}{2}t^2}.$$

Obviously, if $g$ is semidifferentiable at $x$, then $\d g(x)(v)=g'(x;v)$; if $g$ is twice semidifferentiable at $x$, then $\d^2 g(x)(w)=g^{(2)}(x;w)$.

As a generalization of classical directional derivatives, \emph{the (Clarke) generalized directional derivative} of $g$ at $x\in \mathbb{R}^n$ along the direction $v \in \mathbb{R}^n$ is defined as \cite[Section 2.1]{C1990optimization}
$$g^{\comp}(x;v) := \limsup_{x' \to x \atop  t\downarrow 0} \frac{g(x'+ tv) - g(x') }{ t}.$$
We say that $g$ is \emph{Clarke regular} at $x$ \cite[Definition 2.3.4]{C1990optimization} if $g'(x;v)$ exists and $g^{\comp}(x;v)=g'(x;v)$ for all $v$. By using the generalized directional derivative, we can define the (Clarke)\emph{ generalized subdifferential} as
$$\partial g(x):= \{z\in \mathbb{R}^n: \inp{z,v} \leq g^{\comp}(x;v)~ \forall v\in \mathbb{R}^n\}.$$

The \emph{generalized second-order directional derivative} of  $g$ at $x\in \mathbb{R}^n$ along the direction $(u, v) \in \mathbb{R}^n\times \mathbb{R}^n$ is defined as \cite[Definition 1.1]{CC1990generalized} and \cite[Theorem 13.52]{RW2009variational}
$$g^{\comp\comp}(x;u,v) := \limsup_{x' \to x \atop  t\downarrow 0, \delta\downarrow 0} \frac{g(x'+ \delta u + t v) - g(x'+ \delta u) - g(x'+t v) + g(x')}{\delta t}.$$

Especially, when $u=v$, we write $g^{\comp\comp}(x;v,v)$ as $g^{\comp\comp}(x;v)$ for simplicity.

\begin{remark}
When $f$ is continuously differentiable at $(\hat{x},\hat{y})$,  $f_x^{\comp}(\hat{x},\hat{y};v) = \d_x f(\hat{x},\hat{y}) (v)= \nabla_x f(\hat{x},\hat{y})^\top v$ and $f_y^{\comp}(\hat{x},\hat{y};w) =\d_y f(\hat{x},\hat{y}) (w) = \nabla_y f(\hat{x},\hat{y})^\top w$ (\cite[Exercise 8.20]{RW2009variational}). Moreover, if $f$ is twice continuously differentiable at $(\hat{x},\hat{y})$, we know from \cite[Example 13.8, Proposition 13.56]{RW2009variational} that $f_x^{\comp\comp}(\hat{x},\hat{y};v) = \d_x^2 f(\hat{x},\hat{y}) (v)= v^\top \nabla_x^2 f(\hat{x},\hat{y}) v$ and $f_y^{\comp\comp}(\hat{x},\hat{y};w) = \d_y^2 f(\hat{x},\hat{y}) (w) = w^\top \nabla_y^2 f(\hat{x},\hat{y}) w$.

\end{remark}

\begin{example}\label{Eg3}
Consider a two-layer neural network with the ReLU activation function as follows:
$$F(W,b):=\rho(W_2(W_1\xi+b_1)_+ + b_2),$$
where $\xi\in \mathbb{R}^s$, $W_1\in \mathbb{R}^{s_1\times s}$, $b_1\in \mathbb{R}^{s_1}$, $W_2\in \mathbb{R}^{s_2\times s_1}$, $b_2\in \mathbb{R}^{s_2}$, $\rho:\mathbb{R}^{s_2} \to \mathbb{R}$
is a continuously differentiable function, $W=(W_1,W_2)$ and $b=(b_1,b_2)$. Obviously, $F$ is locally Lipschitz continuous. Consider
\begin{align*}
&F'(W,b;\overline{W},\bar{b})=\lim_{t\downarrow 0}\frac{F(W + t \overline{W},b+ t \bar{b}) - F(W,b)}{t} \\
&=\lim_{t\downarrow 0}\frac{\rho((W_2 + t\overline{W}_2)((W_1 + t\overline{W}_1)\xi+b_1+ t\overline{b}_1)_+ + b_2 + t\overline{b}_2) - \rho(W_2(W_1\xi+b_1)_+ + b_2)}{t}
\end{align*}
and
\begin{small}
\begin{align*}
&~~~\lim_{t\downarrow 0}\frac{(W_2 + t\overline{W}_2)((W_1 + t\overline{W}_1)\xi+b_1+ t\overline{b}_1)_+ + b_2 + t\overline{b}_2 - (W_2(W_1\xi+b_1)_+ + b_2)}{t} \\
&= \lim_{t\downarrow 0}\frac{W_2 \left( ((W_1 + t\overline{W}_1)\xi+ b_1+ t\overline{b}_1)_+ - (W_1\xi+b_1)_+\right) + t\left( \overline{W}_2((W_1 + t\overline{W}_1)\xi+ b_1+ t\overline{b}_1)_+ + \overline{b}_2 \right) }{t} \\
&= W_2 \left( \lim_{t\downarrow 0}\frac{ ((W_1 + t\overline{W}_1)\xi+ b_1+ t\overline{b}_1)_+ - (W_1\xi+b_1)_+ }{t} \right) +  \overline{W}_2(W_1\xi+ b_1)_+  + \overline{b}_2.
\end{align*}
\end{small}
Since for $i=1,\cdots,s_1$,
\begin{align*}
&\lim_{t\downarrow 0}\frac{ ((W_1^i + t\overline{W}_1^i)^\top \xi+ b_1^i+ t\overline{b}_1^i)_+ - ((W_1^i)^\top\xi+b_1^i)_+ }{t} \\
&=
\begin{cases}
(\overline{W}_1^i)^\top \xi + \bar{b}_1^i & \text{if}~ (W_1^i)^\top \xi+ b_1^i>0 ~\text{or}~ (W_1^i)^\top \xi+ b_1^i=0~\text{and}~ (\overline{W}_1^i)^\top \xi + \bar{b}_1^i>0; \\
0 &   \text{if}~ (W_1^i)^\top \xi+ b_1^i< 0 ~\text{or}~ (W_1^i)^\top \xi+ b_1^i=0~\text{and}~ (\overline{W}_1^i)^\top \xi + \bar{b}_1^i\leq 0,
\end{cases}
\end{align*}
where $\overline{W}_1^i$ and $W_1^i$ are the $i$th row vectors of $\overline{W}_1$ and $W_1$ respectively and  $\bar{b}_1^i$ and $b_1^i$ are the $i$th components of $\bar{b}_1$ and $b_1$ respectively, we have that the following limit
$$\Upsilon:=W_2 \left( \lim_{t\downarrow 0}\frac{ ((W_1 + t\overline{W}_1)\xi+ b_1+ t\overline{b}_1)_+ - (W_1\xi+b_1)_+ }{t} \right) +  \overline{W}_2(W_1\xi+ b_1)_+  + \overline{b}_2$$
exists. Therefore, we have that $F$ is semidifferentiable based on the locally Lipschitz continuity.

If, moreover, $\rho$ is twice continuously differentiable, we have
\begin{align*}
\d^2F(W,b)(\overline{W},\bar{b})&=\liminf_{t\downarrow 0 \atop \overline{W}'\to \overline{W},\bar{b}' \to \bar{b}}\frac{F(W + t \overline{W}',b+ t \bar{b}') - F(W,b) - t\d F(W,b)(\overline{W}',\bar{b}')}{\frac{1}{2}t^2} \\
&= \Upsilon^\top  \nabla^2 \rho( W_2(W_1\xi+b_1)_+ + b_2 )  \Upsilon,
\end{align*}
which implies that $F$ is twice semidifferentiable.

\end{example}

The following lemma tells the necessary optimality conditions for an unconstrained minimization problem by using subderivatives.

\begin{lemma}[{\cite[Theorem 10.1 \& Theorem 13.24]{RW2009variational}}]\label{Lem5}
Let $g:\mathbb{R}^n \to (-\infty,+\infty]$ be a proper extended-valued function. If $\bar{x}$ is a local minimizer of $g$ over $\mathbb{R}^n$, then $\d g(\bar{x})(v)\geq 0$ and $\d^2g(\bar{x}|0)(v)\geq 0$ for any $v\in \mathbb{R}^n$.
\end{lemma}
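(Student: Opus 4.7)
The plan is to derive both inequalities directly from the definition of a local minimizer, observing that local minimality forces the sign of the relevant difference quotient to be nonnegative on a whole neighborhood of the limit point $(t,v')=(0,v)$, after which the $\liminf$ inherits that sign. Concretely, since $\bar{x}$ is a local minimizer of $g$, I fix $\delta>0$ such that $g(x)\geq g(\bar{x})$ for every $x\in\mathbb{B}(\bar{x},\delta)$. This neighborhood is the only input from the hypothesis that will be used.

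For the first-order claim, I fix $v\in\mathbb{R}^n$ and let $(t_k,v_k)$ be any sequence with $t_k\downarrow 0$ and $v_k\to v$. For all sufficiently large $k$, the bound $\|t_k v_k\|\leq t_k(\|v\|+1)<\delta$ puts $\bar{x}+t_k v_k$ inside $\mathbb{B}(\bar{x},\delta)$, so $g(\bar{x}+t_k v_k)-g(\bar{x})\geq 0$; dividing by $t_k>0$ preserves the sign, so every tail of the difference quotient is nonnegative. Taking the $\liminf$ over such sequences yields
\[
\d g(\bar{x})(v) \;=\; \liminf_{v'\to v,\, t\downarrow 0}\frac{g(\bar{x}+tv')-g(\bar{x})}{t}\;\geq\;0.
\]

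For the second-order claim, I specialize the definition of $\d^2 g(\bar{x}\mid v)(w)$ at $v=0$. The linear correction $t\langle v,w'\rangle$ vanishes, leaving
\[
\d^2 g(\bar{x}\mid 0)(w) \;=\; \liminf_{w'\to w,\, t\downarrow 0}\frac{g(\bar{x}+tw')-g(\bar{x})}{\tfrac{1}{2}t^2}.
\]
The same neighborhood argument as before shows that $g(\bar{x}+tw')-g(\bar{x})\geq 0$ once $\|tw'\|<\delta$; since $\tfrac{1}{2}t^2>0$, the quotient is eventually nonnegative, and again the $\liminf$ is nonnegative.

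There is no real obstacle here: the result is essentially a one-sided restatement of the elementary fact that at a local minimum the difference quotient is nonnegative. The only technical point worth flagging is that the subderivative is a $\liminf$ over the joint variation of $t$ and $v'$, not merely a one-direction limit at fixed $v$, so one must check that \emph{every} nearby pair $(t,v')$ with $t>0$ places $\bar{x}+tv'$ inside the minimizing ball $\mathbb{B}(\bar{x},\delta)$; the trivial bound $\|tv'\|\leq t(\|v\|+1)$ (respectively $t(\|w\|+1)$) takes care of that uniformly for all $t$ small. No smoothness, convexity, or lower semicontinuity of $g$ beyond what is stated is needed.
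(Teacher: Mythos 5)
Your proof is correct. The paper itself does not prove this lemma---it is quoted directly from Rockafellar and Wets (Theorems 10.1 and 13.24)---and your argument is precisely the standard one used there: local minimality makes the difference quotient nonnegative on a whole neighborhood of $(t,v')=(0,v)$, and the $\liminf$ (whether first- or second-order with $v=0$, where the linear correction vanishes) inherits that sign. Your attention to the joint limit in $(t,v')$ rather than a fixed-direction limit is exactly the right technical point to flag.
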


The following lemma shows that we can replace $\d^2g(\bar{x}|0)(v)\geq 0$ by $\d^2g(\bar{x})(v)\geq 0$ under certain mild conditions.

\begin{lemma}\label{Lem6}

Let $g:\mathbb{R}^n \to (-\infty,+\infty]$ be twice semidifferentiable at $\bar{x}$. If $\d g(\bar{x})(v) = 0$, then $\d^2g(\bar{x}|0)(v)=\d^2g(\bar{x})(v)$.
\end{lemma}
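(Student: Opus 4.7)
My plan is to reduce the equality to a single residual computation through the algebraic identity
\begin{equation*}
\frac{g(\bar{x}+tv')-g(\bar{x})}{\tfrac{1}{2}t^{2}}
\;=\; \frac{g(\bar{x}+tv')-g(\bar{x})-t\,\d g(\bar{x})(v')}{\tfrac{1}{2}t^{2}}
\;+\; \frac{2\,\d g(\bar{x})(v')}{t},
\end{equation*}
which holds for every $v'\in\mathbb{R}^{n}$ and every $t>0$. The twice-semidifferentiability hypothesis upgrades the liminf defining $\d^{2} g(\bar{x})(v)$ to a genuine joint limit, so the first summand on the right-hand side converges to $\d^{2} g(\bar{x})(v)$ as $(v',t)\to(v,0^{+})$. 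Applying $\liminf_{v'\to v,\,t\downarrow 0}$ to both sides and invoking the standard fact that a liminf distributes additively over a sum whenever one summand converges, the identity to be proved collapses to the single residual claim
\begin{equation*}
\liminf_{v'\to v,\,t\downarrow 0}\,\frac{2\,\d g(\bar{x})(v')}{t}\;=\;0.
\end{equation*}

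For the upper bound $\le 0$, I would test against the constant inner path $v'\equiv v$: by hypothesis $\d g(\bar{x})(v)=0$, so the ratio is identically zero for every $t>0$, which forces the liminf to be at most zero. For the matching lower bound $\ge 0$, the plan is to combine two ingredients. First, the semidifferentiability part of the hypothesis forces $\d g(\bar{x})(\cdot)$ to be continuous at $v$, so $\d g(\bar{x})(v')\to 0$ as $v'\to v$. Second, the scale matching built into the definition of twice semidifferentiability---the same joint sequence $(v_{k}',t_{k})\to(v,0^{+})$ must produce convergence of the remainder quotient to the finite value $\d^{2} g(\bar{x})(v)$---should rule out pathological behavior of $\d g(\bar{x})(v_{k}')/t_{k}$.

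The main obstacle I anticipate lies precisely in this lower bound. Pointwise continuity of $\d g(\bar{x})(\cdot)$ at $v$ gives no rate of convergence relative to the scale $t$, so one cannot sandwich the residual ratio by a simple vanishing bound; instead, the coupling between the inner sequence $v_{k}'\to v$ and $t_{k}\downarrow 0$ that is built into the joint limit must be exploited to conclude that $\d g(\bar{x})(v_{k}')/t_{k}$ is asymptotically nonnegative along every admissible sequence. Once this lower bound is secured, the residual liminf equals $0$ and the algebraic decomposition immediately yields $\d^{2} g(\bar{x}|0)(v)=\d^{2} g(\bar{x})(v)$, completing the argument.
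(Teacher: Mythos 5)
Your decomposition is exactly the paper's argument made explicit: the paper's proof runs the same chain of equalities, and its final step, $\lim_{t\downarrow 0}\bigl(g(\bar{x}+tv)-g(\bar{x})\bigr)/\tfrac{1}{2}t^2=\d^2 g(\bar{x}|0)(v)$, is precisely your residual claim that $\liminf_{v'\to v,\,t\downarrow 0} 2\,\d g(\bar{x})(v')/t=0$. Your upper bound (testing $v'\equiv v$) is correct and, fed back through the decomposition, yields the inequality $\d^2 g(\bar{x}|0)(v)\leq \d^2 g(\bar{x})(v)$, which does hold under the stated hypotheses and is the only direction actually used downstream, in the proof of Lemma \ref{Prop7}.

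The lower bound you flag as the main obstacle is a genuine gap, and it cannot be closed: the residual liminf can equal $-\infty$, so the lemma as stated is false. Take $n=2$, $g(x)=x_1$, $\bar{x}=0$ and $v=(0,1)$. Then $g$ is twice semidifferentiable with $\d g(0)(w)=w_1$ and $\d^2 g(0)(w)=0$ for every $w$, and $\d g(0)(v)=0$; but along $v'_k=(-\sqrt{t_k},1)\to v$ one gets $2\,\d g(0)(v'_k)/t_k=-2/\sqrt{t_k}\to-\infty$, whence $\d^2 g(0|0)(v)=-\infty\neq 0=\d^2 g(0)(v)$. Continuity of $\d g(\bar{x})(\cdot)$ at $v$ together with $\d g(\bar{x})(v)=0$ places no constraint on the sign or rate of $\d g(\bar{x})(v')$ relative to $t$, and twice semidifferentiability controls only the other summand of your identity, so the coupling you hoped to exploit is simply not there. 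The paper's own proof silently commits the same error by evaluating the joint liminf defining $\d^2 g(\bar{x}|0)(v)$ along the single path $v'=v$; in effect your proposal has located a flaw in the lemma rather than failed to reproduce its proof. The statement becomes correct (and your argument closes in one line, since the residual quotient is then nonnegative) under the additional hypothesis $\d g(\bar{x})(w)\geq 0$ for all $w$; this is available in the lemma's only application, where $\bar{x}$ is a local minimizer and Lemma \ref{Lem5} supplies exactly that sign condition.
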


\begin{proof}
Let $\d g(\bar{x})(v) = 0$. Note that
\begin{align*}
\d^2g(\bar{x})(v)&=\liminf_{v'\to v, t\downarrow 0} \frac{g(\bar{x}+ t v') - g(\bar{x}) - t \d g(\bar{x})(v')}{\frac{1}{2}t^2}\\
&=\lim_{v'\to v, t\downarrow 0} \frac{g(\bar{x}+ t v') - g(\bar{x}) - t \d g(\bar{x})(v')}{\frac{1}{2}t^2}\\
&=\lim_{t\downarrow 0} \frac{g(\bar{x}+ t v) - g(\bar{x}) - t \d g(\bar{x})(v)}{\frac{1}{2}t^2}\\
&=\lim_{t\downarrow 0} \frac{g(\bar{x}+ t v) - g(\bar{x})}{\frac{1}{2}t^2}=\d^2g(x|0)(v),
\end{align*}
where the second equality follows from the twice semidifferentiability of $g$ at $\bar{x}$ and the third equality follows from the existence of the limit.
\end{proof}

\begin{lemma}[{\cite[Theorem 8.2]{RW2009variational}}]\label{Lem7}
For the indicator function $\delta_\mathcal{X}$ of a set $\mathcal{X}\subseteq \mathbb{R}^n$ and any point $x\in \mathcal{X}$, one has
$\d \delta_\mathcal{X} (x)(v) = \delta_{\mathcal{T}_\mathcal{X}(x)}(v)$ for any $v\in \mathbb{R}^n$.
\end{lemma}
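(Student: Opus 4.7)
The plan is to unpack both sides of the claimed equality directly from definitions and split on whether $v$ lies in $\mathcal{T}_\mathcal{X}(x)$. Since $x\in \mathcal{X}$ gives $\delta_\mathcal{X}(x)=0$, the subderivative reduces to
$$\d \delta_\mathcal{X}(x)(v) = \liminf_{v'\to v,\, t\downarrow 0} \frac{\delta_\mathcal{X}(x+tv')}{t},$$
and each difference quotient is either $0$ (when $x+tv'\in \mathcal{X}$) or $+\infty$ (otherwise). Meanwhile, $\delta_{\mathcal{T}_\mathcal{X}(x)}(v)$ equals $0$ precisely when $v\in \mathcal{T}_\mathcal{X}(x)$ and $+\infty$ otherwise. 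So it suffices to show that $\d \delta_\mathcal{X}(x)(v)=0$ if $v\in \mathcal{T}_\mathcal{X}(x)$ and $\d \delta_\mathcal{X}(x)(v)=+\infty$ if $v\notin \mathcal{T}_\mathcal{X}(x)$.

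For the first case, I would use the sequential description of the tangent cone: if $v\in \mathcal{T}_\mathcal{X}(x)$, then there exist $x^k\to x$ with $x^k\in \mathcal{X}$ and $t^k\downarrow 0$ such that $v^k:=(x^k-x)/t^k \to v$. Plugging this admissible pair into the $\liminf$ gives $\delta_\mathcal{X}(x+t^k v^k)/t^k = \delta_\mathcal{X}(x^k)/t^k = 0$, so the $\liminf$ is $\le 0$. The reverse inequality is immediate since $\delta_\mathcal{X}\ge 0$, so $\d\delta_\mathcal{X}(x)(v)=0$.

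For the second case, I would argue by contradiction: if $\d \delta_\mathcal{X}(x)(v)<+\infty$, then there must exist sequences $v^k\to v$ and $t^k\downarrow 0$ for which $\delta_\mathcal{X}(x+t^k v^k)/t^k$ stays bounded. Because the quotient only takes the values $0$ and $+\infty$, boundedness forces $x+t^k v^k\in \mathcal{X}$ for all large $k$. Setting $x^k:=x+t^k v^k$, the triple $(x^k, t^k, v^k)$ satisfies $x^k\in\mathcal{X}$, $x^k\to x$, $t^k\downarrow 0$, and $(x^k-x)/t^k = v^k\to v$, so by definition $v\in \mathcal{T}_\mathcal{X}(x)$, contradicting the standing assumption. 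Hence the $\liminf$ must be $+\infty$.

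There is no real obstacle here beyond bookkeeping: the argument is essentially a definitional matching. The one point to be careful about is respecting the full $\liminf$ over both $v'\to v$ and $t\downarrow 0$ (not just $t\downarrow 0$ with $v'=v$), since in the first case realizing the value $0$ requires perturbing $v'$ along $v^k$, while in the second case the contradiction must exploit arbitrary such perturbations. Once that is handled, the conclusion $\d\delta_\mathcal{X}(x)(v)=\delta_{\mathcal{T}_\mathcal{X}(x)}(v)$ follows.
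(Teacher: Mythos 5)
Your argument is correct and complete: since the paper states this lemma as a citation of Rockafellar--Wets without proof, your definitional verification (the quotient takes only the values $0$ and $+\infty$, and the sequences $(x^k,t^k,v^k)$ realizing a zero quotient are exactly those witnessing $v\in\mathcal{T}_\mathcal{X}(x)$) is precisely the standard proof of that cited result. Nothing is missing, and you correctly handle the $\liminf$ over both $v'\to v$ and $t\downarrow 0$.
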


A function $g:\mathbb{R}^n\to \mathbb{R}$ is called \emph{positively homogeneous of degree $p > 0$} if $g(\lambda w) = \lambda^p g(w)$ for all $\lambda > 0$ and $w\in \mathbb{R}^n$ (see \cite[Definition 13.4]{RW2009variational}).

The following lemma shows the expansion of a function via subderivatives.

\begin{lemma}[{\cite[Theorem 7.21 \& Exercise 13.7]{RW2009variational}}]\label{Lem9}
Let $g:\mathbb{R}^n\to \mathbb{R}$. Then
\begin{enumerate}
\item[(i)] $g$ is semidifferentiable at $\bar{x}$ if and only if
$$g(x)=g(\bar{x}) + \d g(\bar{x})(x-\bar{x}) + o(\norm{x-\bar{x}}),$$
where $\d g(\bar{x})(\cdot)$ is a finite, continuous, positively homogeneous function.

\item[(ii)] Suppose that $g$ is semidifferentiable at $\bar{x}$. Then $g$ is twice semidifferentiable at $\bar{x}$ if and only if
$$g(x)=g(\bar{x}) + \d g(\bar{x})(x-\bar{x}) + \frac{1}{2}\d^2 g(\bar{x})(x-\bar{x}) +  o(\norm{x-\bar{x}}^2),$$
where $\d^2 g(\bar{x})(\cdot)$ is a finite, continuous, positively homogeneous of degree $2$ function.
\end{enumerate}
\end{lemma}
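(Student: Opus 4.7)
The plan is to establish the two ``if and only if'' claims via four implications. The reverse directions in (i) and (ii) are the easier ones: given the stated expansion, I would take arbitrary sequences $v'\to v$ and $t\downarrow 0$, plug $x=\bar{x}+tv'$ into the expansion, and use positive homogeneity of the putative derivative to extract the factor $t$ (or $t^2$), leaving a remainder that vanishes uniformly on bounded sets because $\norm{tv'}=O(t)$. Continuity of $\d g(\bar{x})(\cdot)$ (respectively $\d^2 g(\bar{x})(\cdot)$) then lets me pass from $v'$ to $v$ in the remaining term, and the limit recovers the value required by the definitions of semidifferentiability and twice semidifferentiability.

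For the forward direction of (i), I would first verify positive homogeneity of $h:=\d g(\bar{x})(\cdot)$: given $\lambda>0$, the substitutions $v'=\lambda w'$ and $s=\lambda t$ in the defining joint limit immediately yield $h(\lambda v)=\lambda h(v)$. Next, I would prove continuity of $h$ by a diagonal argument. Let $v^k\to v$. For each $k$, semidifferentiability at $v^k$ supplies sequences $v'_{k,j}\to v^k$ and $t_{k,j}\downarrow 0$ such that $[g(\bar{x}+t_{k,j}v'_{k,j})-g(\bar{x})]/t_{k,j}\to h(v^k)$. Diagonalize by picking indices $j_k$ with $\norm{v'_{k,j_k}-v}\to 0$, $t_{k,j_k}\downarrow 0$, and the difference quotient within $1/k$ of $h(v^k)$; then the joint-limit definition at $v$ forces this same quotient to tend to $h(v)$, so $h(v^k)\to h(v)$. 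Finally, to derive the expansion, write an arbitrary $x^k\to \bar{x}$ as $x^k=\bar{x}+t_k v^k$ with $t_k=\norm{x^k-\bar{x}}\to 0$ and $v^k$ on the unit sphere. Positive homogeneity gives $h(x^k-\bar{x})/t_k=h(v^k)$, so the normalized remainder equals $[g(\bar{x}+t_k v^k)-g(\bar{x})]/t_k - h(v^k)$. Extract a subsequence with $v^k\to \bar{v}$ by compactness of the sphere; the joint limit at $\bar{v}$ sends the first term to $h(\bar{v})$ and continuity of $h$ sends the second to $h(\bar{v})$, so the remainder vanishes along the subsequence. A standard subsequence-of-subsequence argument promotes this to convergence of the whole sequence.

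Part (ii) follows the same template applied one order higher. Given twice semidifferentiability, I would establish positive homogeneity of degree $2$ of $q:=\d^2 g(\bar{x})(\cdot)$ by the analogous substitution in its defining joint limit, continuity of $q$ by a diagonal argument identical in spirit to the one for $h$, and the expansion by the $w^k=t_k v^k$ parametrization on the unit sphere. The only new ingredient is that I would use the first-order expansion from (i) to rewrite the numerator $g(\bar{x}+tw')-g(\bar{x})-t\,\d g(\bar{x})(w')$ using the already-continuous, positively homogeneous $h$, which is what keeps the second-order remainder under control. The converse is again a direct substitution into the expansion combined with degree-$2$ homogeneity and continuity of $q$.

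The main obstacle I anticipate is the continuity of $h$ (and then $q$) from the joint-limit definition of semidifferentiability; this is the step where the condition ``$v'\to v$'' in the definition, as opposed to the weaker one-sided directional derivative with $v$ held fixed, is doing all the work. A careless argument that only invokes $t\downarrow 0$ with $v$ fixed would fail here. Once continuity and positive homogeneity are secured, every remaining step reduces to routine applications of the joint-limit hypothesis together with compactness of the unit sphere.
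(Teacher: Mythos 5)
The paper does not prove this lemma at all: it is quoted directly from Rockafellar--Wets (Theorem 7.21 and Exercise 13.7), so there is no in-paper argument to compare against. Your reconstruction is, in substance, the standard textbook proof and it is correct: the reverse implications by substitution into the expansion plus homogeneity and continuity, homogeneity of $\d g(\bar{x})(\cdot)$ by rescaling the joint limit, continuity by diagonalization, and the expansion via the unit-sphere/compactness subsequence argument all go through, and you correctly identify that the joint limit $v'\to v$ (as opposed to the fixed-direction one-sided derivative) is exactly what makes the continuity step work.

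One loose end worth closing: finiteness of $\d g(\bar{x})(\cdot)$. The paper's definition of semidifferentiability explicitly allows the limit to be infinite, yet your continuity argument tacitly assumes finiteness (the ``within $1/k$ of $h(v^k)$'' step is meaningless if $h(v^k)=\pm\infty$), and the lemma asserts finiteness as part of the conclusion. In the forward direction you should first observe that existence of the joint limit at $v=0$ forces $\d g(\bar{x})(0)=0$ (take $v'\equiv 0$), and then positive homogeneity combined with your own diagonal construction rules out an infinite value at any $v$: if $h(v)=+\infty$ then $h(v/k)=+\infty$ for all $k$, and diagonalizing produces $(v'_k,t_k)\to(0,0)$ with difference quotients tending to $+\infty$, contradicting $h(0)=0$. (Alternatively, in every application in the paper $g$ is locally Lipschitz, which gives finiteness directly, as the paper itself remarks.) With that supplement your proof is complete; the same remark applies verbatim to $\d^2 g(\bar{x})(\cdot)$ in part (ii).
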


 The following lemma gives the first-order and second-order optimality conditions for minimizing a semidifferentiable function, which extends \cite[Proposition 2.3]{CCHP2020study} with a polyhedral set $\mathcal{X}$. For completeness, we give a simple proof.
\begin{lemma}\label{Prop7}
Let $\mathcal{X}\subseteq \mathbb{R}^n$ be a closed and convex set, $g: \mathbb{R}^n\to \mathbb{R}$ be semidifferentiable at $\bar{x}\in \mathcal{X}$, and $\bar{x}$ be a local minimizer of $g$ over $\mathcal{X}$. Then
 $\d g(\bar{x})(v)\geq 0$ for all $v\in \mathcal{T}_{\mathcal{X}}(\bar{x})$.
Moreover, if $g$ is twice semidifferentiable at $\bar{x}$, then $\d^2g(\bar{x})(v)\geq 0$ for all $v\in  \mathcal{T}_{\mathcal{X}}^\circ(\bar{x}) \cap \{v: \d g(\bar{x})(v) =0\}$.
\end{lemma}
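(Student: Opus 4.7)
The plan is to handle the two claims separately, using the tangent cone's defining sequences for the first-order part and exploiting the convexity of $\mathcal{X}$ together with the second-order expansion (Lemma \ref{Lem9}(ii)) for the second-order part.

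For the first-order claim, I take an arbitrary $v\in\mathcal{T}_{\mathcal{X}}(\bar{x})$ and invoke the definition of the tangent cone to obtain $x^k\overset{\mathcal{X}}{\to}\bar{x}$ and $t^k\downarrow 0$ with $v^k:=(x^k-\bar{x})/t^k\to v$. Since $\bar{x}$ is a local minimizer of $g$ over $\mathcal{X}$, for all large $k$ we have $g(x^k)\geq g(\bar{x})$, hence $\bigl(g(\bar{x}+t^k v^k)-g(\bar{x})\bigr)/t^k\geq 0$. Passing to the liminf and using that $g$ is semidifferentiable at $\bar{x}$ (so the subderivative coincides with the genuine limit appearing in the semidifferentiability definition, by Lemma \ref{Lem9}(i) applied along $v^k\to v$), yields $\d g(\bar{x})(v)\geq 0$.

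For the second-order claim, fix $v\in\mathcal{T}_{\mathcal{X}}^\circ(\bar{x})$ with $\d g(\bar{x})(v)=0$. By definition of $\mathcal{T}_{\mathcal{X}}^\circ(\bar{x})$, there is $\lambda>0$ with $\bar{x}+\lambda v\in\mathcal{X}$; the convexity of $\mathcal{X}$ then gives $\bar{x}+tv=(1-t/\lambda)\bar{x}+(t/\lambda)(\bar{x}+\lambda v)\in\mathcal{X}$ for every $t\in(0,\lambda]$. Hence $\bar{x}+tv$ lies in $\mathcal{X}$ and, for sufficiently small $t>0$, in the neighborhood of $\bar{x}$ where local minimality holds, so that $g(\bar{x}+tv)\geq g(\bar{x})$. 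Applying Lemma \ref{Lem9}(ii) along the direction $tv$, together with the positive homogeneity of $\d g(\bar{x})(\cdot)$ of degree $1$ and of $\d^2 g(\bar{x})(\cdot)$ of degree $2$, gives
\[
g(\bar{x}+tv)=g(\bar{x})+t\,\d g(\bar{x})(v)+\tfrac{1}{2}t^{2}\d^{2}g(\bar{x})(v)+o(t^{2})=g(\bar{x})+\tfrac{1}{2}t^{2}\d^{2}g(\bar{x})(v)+o(t^{2}),
\]
where the vanishing of the first-order term uses $\d g(\bar{x})(v)=0$. Dividing the inequality $g(\bar{x}+tv)-g(\bar{x})\geq 0$ by $t^2/2$ and letting $t\downarrow 0$ yields $\d^{2}g(\bar{x})(v)\geq 0$.

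The only delicate point is ensuring that the second-order test can actually be probed along the ray $t\mapsto\bar{x}+tv$ while staying in $\mathcal{X}$; this is precisely why the statement restricts attention to $v\in\mathcal{T}_{\mathcal{X}}^\circ(\bar{x})$ (rather than the full tangent cone) and uses convexity to extend feasibility from a single $\lambda$ to the whole interval $(0,\lambda]$. Everything else is bookkeeping with the expansions from Lemma \ref{Lem9}.
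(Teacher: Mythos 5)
Your proof is correct, and it reaches the conclusion by a more elementary route than the paper's. The paper works with the penalized function $g+\delta_{\mathcal{X}}$ and invokes the unconstrained necessary conditions of Lemma \ref{Lem5} together with the indicator-cone identity of Lemma \ref{Lem7} (for the first-order claim) and Lemma \ref{Lem6} (for the second-order claim), so the constrained statement falls out of the subderivative calculus; this requires, implicitly, that the subderivative of the sum splits, which is exactly where semidifferentiability of $g$ enters. You instead argue directly with the defining sequences of $\mathcal{T}_{\mathcal{X}}(\bar{x})$ and with the feasible ray $t\mapsto\bar{x}+tv$ for $v\in\mathcal{T}_{\mathcal{X}}^{\circ}(\bar{x})$, using the expansions of Lemma \ref{Lem9} to identify the limits of the difference quotients. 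Both arguments hinge on the same two facts — semidifferentiability upgrades the $\liminf$ in the definition of $\d g(\bar{x})(v)$ to a genuine limit, so nonnegativity along one admissible sequence suffices; and convexity of $\mathcal{X}$ turns a single feasible point $\bar{x}+\lambda v$ into feasibility of the whole segment, which is why the second-order claim is restricted to $\mathcal{T}_{\mathcal{X}}^{\circ}(\bar{x})$ — and you correctly isolate both. What your version buys is self-containedness and transparency about where each hypothesis is used; what the paper's version buys is a cleaner link to the unconstrained optimality conditions and to the second-order subderivative $\d^{2}g(\bar{x}\,|\,0)(v)$ that reappears elsewhere. No gaps.
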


\begin{proof}
From Lemma \ref{Lem5} and Lemma \ref{Lem7}, we have
$$0\leq \d (g(\bar{x}) + \delta_{\mathcal{X}}(\bar{x}))(v)= \d g(\bar{x})(v) + \delta_{\mathcal{T}_{\mathcal{X}}(\bar{x})}(v) = \d g(\bar{x})(v).$$
Moreover, if $g$ is twice semidifferentiable at $\bar{x}$, for  $v\in  \mathcal{T}_{\mathcal{X}}^\circ(\bar{x}) \cap \{v: \d g(\bar{x})(v) =0\}$,
from Lemma \ref{Lem6}, we have
\begin{align*}
0&\leq
\liminf_{v'\to v, t\downarrow 0} \frac{g(\bar{x}+ t v') + \delta_{\mathcal{X}}(\bar{x}+ t v') - g(\bar{x}) - \delta_{\mathcal{X}}(\bar{x}) - t \d g(\bar{x})(v)}{\frac{1}{2} t^2} \\
&\leq \liminf_{t\downarrow 0} \frac{g(\bar{x}+ t v) + \delta_{\mathcal{X}}(\bar{x}+ t v) - g(\bar{x}) - \delta_{\mathcal{X}}(\bar{x}) - t \d g(\bar{x})(v)}{\frac{1}{2} t^2} \\
&= \lim_{t\downarrow 0} \frac{g(\bar{x}+ t v) - g(\bar{x}) - t \d g(\bar{x})(v) }{\frac{1}{2} t^2}\\
&= \d^2 g(\bar{x})(v).
\end{align*}
\end{proof}

The following lemma gives a description of the generalized second-order directional derivative by using directional
derivatives.

\begin{lemma}[{\cite[Proposition 1.3]{CC1990generalized}}]\label{Lem10}
Let $g:\mathbb{R}^n\to \mathbb{R}$ be a continuous function that admits a directional
derivative at every point near $x$. Then $g^{\comp\comp}(x;u,v)$ is the generalized directional derivative of $g'(\cdot,v)$ at $x$ along direction $u$, that is
$$g^{\comp\comp}(x;u,v) = \limsup_{x' \to x \atop  t\downarrow 0} \frac{g'(x'+ t u;v) - g'(x';v) }{ t}.$$
\end{lemma}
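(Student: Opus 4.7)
The plan is to establish the equality between the two $\limsup$ expressions by proving both inequalities $\leq$ and $\geq$ separately, in each case linking the mixed second difference $g(x'+\delta u + tv) - g(x'+\delta u) - g(x'+tv) + g(x')$ appearing in the definition of $g^{\comp\comp}(x;u,v)$ with the first-order quotient $g'(x'+tu;v) - g'(x';v)$ appearing on the right-hand side.

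For the inequality $\limsup_{x'\to x,\, s\downarrow 0} \frac{g'(x'+su;v) - g'(x';v)}{s} \leq g^{\comp\comp}(x;u,v)$, I would pick sequences $x'_k \to x$ and $s_k \downarrow 0$ realizing the left-hand $\limsup$. Using the definition $g'(y;v) = \lim_{t\downarrow 0}[g(y+tv) - g(y)]/t$ at $y = x'_k + s_k u$ and $y = x'_k$, the ratio $\frac{g'(x'_k + s_k u;v) - g'(x'_k;v)}{s_k}$ equals the limit as $t\downarrow 0$ of the mixed second difference divided by $s_k t$. A diagonal extraction choosing $t_k \downarrow 0$ fast enough that this $t$-limit is approximated within $1/k$ then yields a triple sequence $(x'_k, s_k, t_k)$ along which the mixed-difference quotient converges to the same value, and the resulting $\limsup$ is bounded above by $g^{\comp\comp}(x;u,v)$ by the definition of the latter.

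For the reverse inequality, fix sequences $x'_k \to x$, $\delta_k \downarrow 0$, $t_k \downarrow 0$ realizing $g^{\comp\comp}(x;u,v)$ and introduce the one-variable auxiliary $\psi_k(\tau) := g(x'_k + \delta_k u + \tau v) - g(x'_k + \tau v)$ for $\tau \in [0, t_k]$. By the continuity of $g$ near $x$ and the pointwise existence of $g'(\cdot;v)$, $\psi_k$ is continuous with right derivative $D_+\psi_k(\tau) = g'(x'_k + \delta_k u + \tau v;v) - g'(x'_k + \tau v;v)$. A Dini-type mean value inequality for continuous functions with right derivatives then produces, for tolerance $\delta_k/k$, a point $\tau_k^* \in [0, t_k]$ with $\frac{\psi_k(t_k) - \psi_k(0)}{t_k} \leq D_+\psi_k(\tau_k^*) + \frac{\delta_k}{k}$. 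Dividing by $\delta_k$ and setting $y_k := x'_k + \tau_k^* v \to x$, the mixed-difference quotient is bounded by $\frac{g'(y_k + \delta_k u;v) - g'(y_k;v)}{\delta_k} + \frac{1}{k}$, and taking $\limsup$ in $k$ bounds $g^{\comp\comp}(x;u,v)$ by the right-hand side of the claimed identity.

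The key technical step is this second direction, specifically invoking the Dini mean value inequality for $\psi_k$, which is only continuous and directionally differentiable rather than Lipschitz or classically differentiable, and calibrating the approximation error as $\delta_k/k$ rather than $1/k$ so that after the division by $\delta_k$ the residual still vanishes as $k \to \infty$. The continuity assumption on $g$ is essential to guarantee that $\psi_k$ is continuous so that the Dini MVT applies, while the hypothesis of pointwise directional differentiability of $g$ near $x$ supplies the explicit formula for $D_+\psi_k$ that lets us translate the bound back into a Clarke-type difference of $g'(\cdot;v)$.
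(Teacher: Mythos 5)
The paper does not prove this lemma; it is quoted verbatim from Cominetti--Correa \cite[Proposition 1.3]{CC1990generalized}, so there is no in-paper argument to compare against. Your proof is correct and reconstructs essentially the standard argument from that reference: the inequality bounding the Clarke derivative of $g'(\cdot;v)$ by $g^{\comp\comp}(x;u,v)$ via a diagonal extraction in $t$ after writing each first-order quotient as a limit of mixed second differences, and the reverse inequality via the Dini-type mean value inequality applied to $\psi_k(\tau)=g(x'_k+\delta_k u+\tau v)-g(x'_k+\tau v)$, whose continuity and right differentiability are exactly what the hypotheses on $g$ supply. Your calibration of the tolerance as $\delta_k/k$ so that it survives the division by $\delta_k$ is the right touch, and the relocation of the base point to $y_k=x'_k+\tau_k^* v\to x$ correctly keeps the resulting quotient admissible in the outer $\limsup$.
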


\begin{remark}\label{Rem1}
Note that
%
\begin{align*}
g^{\comp\comp}(x;v) \geq  \lim_{ t\downarrow 0} \frac{g(x+t v + t v) - g(x+ t v) - g(x+t v) + g(x)}{t^2} = g^{(2)}(x;v).
\end{align*}

Recall that $g:\mathbb{R}^n\to \mathbb{R}$ is \emph{twice subregular} at $x$ \cite[Definition 3.1]{CC1990generalized} if the limit
$$\lim_{ t\downarrow 0, \delta\downarrow 0} \frac{g(x+ \delta u + t v) - g(x+ \delta u) - g(x+t v) + g(x)}{\delta t}$$
exists and the above limit equals to $g^{\comp\comp}(x;u,v)$. Thus, we know that $g^{\comp\comp}(x;v) = g^{(2)}(x;v)$ if $g$ is twice subregular at $x$.
\end{remark}

Now we are ready to give the main results of this subsection.

\begin{theorem}\label{Th5}
Let the tuple $(\hat{x},\hat{y})\in X\times Y$ be a local minimax point of problem \eqref{minmax}.

\begin{enumerate}
\item[(i)] If $f$ is semidifferentiable at $(\hat{x},\hat{y})$, then
\begin{subequations}
\label{NonS1st}
\begin{align}
&f_x^{\comp}(\hat{x},\hat{y};v) \geq 0~\text{for all}~v\in \mathcal{T}_{X}(\hat{x}),\label{NonS1st-1}\\
&\d_y f(\hat{x},\hat{y}) (w) \leq 0~\text{for all}~w\in \mathcal{T}_{Y}(\hat{y}),\label{NonS1st-2}
\end{align}
\end{subequations}
where $f_x^{\comp}(\hat{x}, \hat{y};v)$ denotes the generalized directional derivative of $f$ with respect to $x$ at $\hat{x}$ along the direction $v$ for fixed $\hat{y}$.

\item[(ii)] Assume, further, that $f$ is twice semidifferentiable at $(\hat{x},\hat{y})$ and $f$ is Clarke regular in a neighborhood of $(\hat{x},\hat{y})$. Then
    \end{enumerate}
\begin{subequations}
\label{NonS2ed}
\begin{align}
&f_x^{\comp\comp}(\hat{x},\hat{y};v) \geq 0~\text{for all}~v\in \mathcal{T}_{X}^\circ(\hat{x}) \cap \{v: \exists \delta>0, \d_x f(\hat{x},y') (v) =0,\forall y' \in \mathbb{B}(\hat{y},\delta)\cap Y\}, \label{NonS2ed-1}\\
&\d_y^2 f(\hat{x},\hat{y}) (w) \leq 0~\text{for all}~w\in \mathcal{T}_{Y}^\circ(\hat{y}) \cap \{w: \d_y f(\hat{x},\hat{y}) (w) =0 \},\label{NonS2ed-2}
\end{align}
\end{subequations}
where $f_x^{\comp\comp} (\hat{x},\hat{y}; v)$ denotes the generalized second-order directional derivative of $f$ with respect to $x$ at $\hat{x}$ along the direction $(v,v)$ for fixed $\hat{y}$.

\end{theorem}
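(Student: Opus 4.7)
My plan is to use Proposition \ref{Prop1}(ii) to decouple the local-minimax condition at $(\hat{x}, \hat{y})$ into (a) $\hat{y}$ being a local maximizer of $f(\hat{x}, \cdot)$ on $Y$, and (b) for every sufficiently small $\delta > 0$, $\hat{x}$ being a local minimizer of $\varphi_\delta$ on $X$, where $\varphi_\delta(x) := \max_{y \in Y_\delta} f(x, y)$ and $Y_\delta := \mathbb{B}(\hat{y}, \delta) \cap Y$. The $y$-side assertions \eqref{NonS1st-2} and \eqref{NonS2ed-2} then follow by applying the first- and second-order parts of Lemma \ref{Prop7} to the (twice) semidifferentiable function $-f(\hat{x}, \cdot)$ at its local minimizer $\hat{y}$, using that semidifferentiability is preserved under negation with $\d_y(-f)(\hat{x},\hat{y})(w) = -\d_y f(\hat{x},\hat{y})(w)$ and likewise $\d_y^2(-f)(\hat{x},\hat{y})(w) = -\d_y^2 f(\hat{x},\hat{y})(w)$ under twice semidifferentiability.

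For the first-order $x$-condition \eqref{NonS1st-1}, I would not apply Lemma \ref{Prop7} to $\varphi_\delta$ directly because the envelope need not be semidifferentiable. Instead, fix $\delta > 0$ small enough that $\hat{y} \in \argmax_{y \in Y_\delta} f(\hat{x}, y)$, so $\varphi_\delta(\hat{x}) = f(\hat{x}, \hat{y})$. For $v \in \mathcal{T}_X^\circ(\hat{x})$ and small $t > 0$ with $\hat{x} + tv \in X$, pick $y_t^* \in \argmax_{y \in Y_\delta} f(\hat{x} + tv, y)$. The three-term chain
\begin{equation*}
f(\hat{x} + tv, y_t^*) = \varphi_\delta(\hat{x} + tv) \geq \varphi_\delta(\hat{x}) = f(\hat{x}, \hat{y}) \geq f(\hat{x}, y_t^*)
\end{equation*}
produces $f(\hat{x} + tv, y_t^*) - f(\hat{x}, y_t^*) \geq 0$. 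Extracting a convergent subsequence $y_{t_k}^* \to y^* \in Y_\delta$ and passing to the limsup while exploiting joint local Lipschitz continuity of $f$ together with the upper semicontinuity of Clarke's generalized directional derivative in its base point yields $f_x^{\comp}(\hat{x}, y^*; v) \geq 0$. Letting $\delta \downarrow 0$ drives $y^*$ into arbitrarily small neighborhoods of $\hat{y}$, and a second upper-semicontinuity step in the $y$-argument delivers $f_x^{\comp}(\hat{x}, \hat{y}; v) \geq 0$. The extension from $\mathcal{T}_X^\circ(\hat{x})$ to $\mathcal{T}_X(\hat{x})$ is handled by Lemma \ref{Lem3} together with the continuity and positive homogeneity of $f_x^{\comp}(\hat{x}, \hat{y}; \cdot)$.

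The second-order $x$-condition \eqref{NonS2ed-1} is obtained by running the scheme one order higher. The critical-direction hypothesis $\d_x f(\hat{x}, y')(v) = 0$ for every $y' \in \mathbb{B}(\hat{y}, \delta) \cap Y$, combined with Clarke regularity of $f$ in a neighborhood of $(\hat{x}, \hat{y})$, forces $f'_x(\hat{x}, y'; v) = f_x^{\comp}(\hat{x}, y'; v) = 0$ uniformly for $y'$ near $\hat{y}$, so that the linear-in-$t$ contribution in the expansion of $f(\hat{x} + tv, y') - f(\hat{x}, y')$ vanishes on that neighborhood. The local-min inequality $\varphi_\delta(\hat{x} + tv) \geq \varphi_\delta(\hat{x})$, after subtracting the vanishing first-order term and normalizing by $\tfrac{1}{2} t^2$, becomes a nonnegative second-difference quotient. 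Combining this with the representation of $f_x^{\comp\comp}(\hat{x}, \hat{y}; v, v)$ supplied by Lemma \ref{Lem10} as the Clarke generalized directional derivative of $f'_x(\cdot, \hat{y}; v)$ at $\hat{x}$ along $v$, and reapplying the subsequence/upper-semicontinuity machinery, yields $f_x^{\comp\comp}(\hat{x}, \hat{y}; v) \geq 0$.

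The main obstacle is that $\varphi_\delta$ is not known to be (twice) semidifferentiable, which forces a direct inequality argument in which the maximizer $y_t^*$ drifts with $t$. Controlling this drift hinges on joint local Lipschitz continuity of $f$ (to bound difference quotients uniformly in $y$) and on upper semicontinuity of the (generalized) directional derivatives in both arguments. In part (ii), the twice semidifferentiability and Clarke regularity hypotheses are precisely what let us replace $\d_x f$ by $f'_x$ uniformly across a $Y$-neighborhood of $\hat{y}$ and, combined with the strong critical-cone restriction on $v$, neutralize the first-order term so that the second-order information captured by $f_x^{\comp\comp}$ becomes accessible.
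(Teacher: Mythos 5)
Your overall architecture is the same as the paper's: the $y$-side inequalities \eqref{NonS1st-2} and \eqref{NonS2ed-2} come from Lemma \ref{Prop7} applied to $-f(\hat{x},\cdot)$ at the local maximizer $\hat{y}$, and the $x$-side comes from the local-minimax inequality $f(\hat{x}+tv,y_t^*)-f(\hat{x},y_t^*)\ge 0$ followed by a limit. The difference, and the gap, lies in how that limit is taken. You fix the radius $\delta$ of the $y$-ball, send $t\downarrow 0$ so that $y_t^*$ converges to some $y^*$ that is only known to lie in $\mathbb{B}(\hat{y},\delta)\cap Y$, and then send $\delta\downarrow 0$, invoking upper semicontinuity of $f_x^{\comp}(\hat{x},\cdot\,;v)$ in the $y$-argument. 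That semicontinuity is false in general: the partial generalized directional derivative (equivalently, the Clarke subdifferential of the partial map $x\mapsto f(x,y)$) is upper semicontinuous in $x$ for fixed $y$, but not jointly in $(x,y)$. For instance, $f(x,y)=\min\{|x|,y\}-x/2$ has $f_x^{\comp}(0,1/k;1)=1/2\ge 0$ for every $k$ while $f_x^{\comp}(0,0;1)=-1/2<0$, so the inference ``nonnegativity at $y_k\to\hat{y}$ implies nonnegativity at $\hat{y}$'' is unsound. The semicontinuity that does hold is that of the joint subdifferential $\partial f$ (or of $(z,u)\mapsto f^{\comp}(z;u)$ in all variables), but using it only delivers $f^{\comp}(\hat{x},\hat{y};(v,0))\ge 0$, which dominates the partial quantity from above rather than below.

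The paper avoids the two-step limit altogether: it takes the $y$-ball radius to be $\tau(\delta_k)$ with $\delta_k=\norm{x^k-\hat{x}}$, so that $\tilde{y}^k\to\hat{y}$ simultaneously with $t_k\downarrow 0$, and passes to the limit through Lebourg's mean value theorem and the upper semicontinuity of the joint subdifferential, arriving at $\sup_{\zeta\in\partial f(\hat{x},\hat{y})}\inp{\zeta,(v,0)}\ge 0$. The same issue recurs in your part (ii): after the first-order term is killed on the critical cone, Lemma \ref{Lem10} represents $f_x^{\comp\comp}(\hat{x},\hat{y};v)$ as a limsup of difference quotients of $f_x'(\cdot,\hat{y};v)$ with $\hat{y}$ frozen, whereas your quotients involve $f_x'(\cdot,y_t^*;v)$ with a drifting second argument; the paper resolves this by working with the joint objects $f'(\cdot,\cdot;v,0)$ and $f^{\comp\comp}(\hat{x},\hat{y};(v,0))$, for which the drift $\tilde{y}^k\to\hat{y}$ is admissible inside the defining limsup. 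To repair your argument, couple the shrinking of the $y$-ball to $t$ via the function $\tau$ from Definition \ref{Def4} rather than fixing $\delta$ and iterating limits, and carry out the passage to the limit with the joint derivatives as the paper does.
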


\begin{proof}
\eqref{NonS1st-2} and \eqref{NonS2ed-2} directly follow from Proposition \ref{Prop7}. Therefore, we only focus on  \eqref{NonS1st-1} and \eqref{NonS2ed-1}, respectively.

(i) Since $(\hat{x},\hat{y})$ is a local minimax point, there exist a $\delta_0>0$ and a function $\tau:\mathbb{R}_+\rightarrow \mathbb{R}_+$ satisfying $\tau(\delta)\rightarrow 0$ as $\delta\rightarrow 0$, such that for any $\delta\in (0,\delta_0]$ and $(x,y)\in X\times Y$ satisfying $\norm{x-\hat{x}}\leq \delta$ and $\norm{y-\hat{y}}\leq \delta$, we have
\begin{equation}\label{stationary1}
f(\hat{x},y) \leq f(\hat{x},\hat{y}) \leq \max_{y'\in\{y\in Y: \norm{y-\hat{y}}\leq \tau(\delta)\}}f(x,y').
\end{equation}

For any $v\in\mathcal{T}_X(\hat{x})$, according to the convexity of $X$, there exist $\{v^k\}_{k\geq 1}$ with $v^k\to v$ as $k\to \infty$ and $\{t_k\}_{k\geq 1}$ with $t_k \downarrow 0$ as $k\to \infty$, such that $x^k:=\hat{x}+ t_kv^k \in X$ (see Lemma \ref{Lem3}). Let $\delta_k=\norm{x^k -\hat{x}}$ and $\tilde{y}^k$ be defined by
\begin{equation}
\label{gs10}
\tilde{y}^k\in \argmax_{y' \in \{y\in Y:  \norm{y-\hat{y}}\leq \tau(\delta_k)\}} f(x^k,y').
\end{equation}
Obviously, $\delta_k\to 0$ and $\norm{\tilde{y}^k \to \hat{y}}\to 0$ as $k\to \infty$. According to the second inequality of (\ref{stationary1}), we have (for sufficiently large $k$) that
\begin{equation}
\label{gs18}
\begin{aligned}
0 \leq f(x^k, \tilde{y}^k) - f(\hat{x}, \hat{y}) &= f(x^k, \tilde{y}^k) - f(\hat{x}, \tilde{y}^k) +  f(\hat{x}, \tilde{y}^k) - f(\hat{x}, \hat{y}) \\
&\leq f(x^k, \tilde{y}^k) - f(\hat{x}, \tilde{y}^k) \\
& \in \inp{\partial f(\tilde{x}^k, \tilde{y}^k), \begin{pmatrix}t_k v^k\\ 0 \end{pmatrix}},
\end{aligned}
\end{equation}
where the last inclusion follows from the mean-value theorem \cite[Theorem 2.3.7]{C1990optimization} and $\tilde{x}^k$ lies in the segment between $x^k$ and $\hat{x}$. Thus, by dividing $t_k$ in both sides and letting $k\to \infty$, due to the upper semicontinuity of $\partial f(\cdot,\cdot)$ (see \cite[Proposition 2.1.5]{C1990optimization}), we obtain
\begin{align*}
0 &\leq \sup_{\zeta\in \partial f(\hat{x}, \hat{y})} \inp{\zeta,\begin{pmatrix}v\\ 0 \end{pmatrix}}
= f^{\comp} (\hat{x}, \hat{y};v,0)
= f_x^{\comp}(\hat{x}, \hat{y};v),
\end{align*}
where $f_x^{\comp}(\hat{x}, \hat{y};v)$ denotes the Clarke generalized directional derivative of $f$ with respect to $x$ at $\hat{x}$  along the direction $v$ for fixed $\hat{y}$.


(ii) Let $v \in \mathcal{T}_{X}^\circ(\hat{x}) \cap \{v: \exists \delta>0, \d_x f(\hat{x},y') (v) =0,\forall y' \in \mathbb{B}(\hat{y},\delta)\cap Y\}$. Then there exists a sequence $\{t_k\}_{k\geq 1}$ with $t_k \downarrow 0$, such that $x^k:=\hat{x} + t_kv \in X$. Let $\delta_k=\norm{x^k-\hat{x}}$, and $\tilde{y}^k$ be defined in \eqref{gs10}.

From mean-value theorem, there is $\zeta_k\in(0,t_k)$ such that
\begin{equation}
\label{gs12}
f(\hat{x}+t_k v,\tilde{y}^k) - f(\hat{x}, \tilde{y}^k) \in   \partial f(\hat{x}+ \zeta_k v,\tilde{y}^k) \begin{pmatrix}v\\0 \end{pmatrix}.
\end{equation}
Similar to \eqref{gs18}, we have $f(\hat{x} + t_k v, \tilde{y}^k) - f(\hat{x}, \tilde{y}^k) \geq 0$. Then, according to the Clarke regularity of $f$ near $(\hat{x},\hat{y})$ and the definition of generalized directional derivative, we have from \eqref{gs12} that
\begin{align*}
0 &\leq \limsup_{k\to \infty} \frac{ f'(\hat{x}+ \zeta_k v,\tilde{y}^k;v,0) }{\zeta_k} = \limsup_{k\to \infty} \frac{ f'(\hat{x}+ \zeta_k v,\tilde{y}^k;v,0) - f'(\hat{x}, \tilde{y}^k;v,0) }{\zeta_k}\\
& \leq f^{\comp\comp} (\hat{x},\hat{y}; v,0) = f_x^{\comp\comp} (\hat{x},\hat{y}; v),
\end{align*}
where the first equality follows from $f'(\hat{x}, \tilde{y}^k;v,0)=0$ for sufficiently large $k$, the second inequality follows from Lemma \ref{Lem10} and $f_x^{\comp\comp} (\hat{x},\hat{y}; v)$ denotes the generalized second-order directional derivative of $f$ with respect to $x$  at $\hat{x}$ along the direction $(v,v)$ for fixed $\hat{y}$.
\end{proof}

\begin{remark}\label{Rem2}
We know from \cite[Page 10]{C1990optimization}  that, for any $v$,
$f_x^{\comp}(\hat{x},\hat{y};v)=\max_{z\in \partial_x f(\hat{x},\hat{y})} \inp{z,v}$.
Thus, \eqref{NonS1st-1} can be equivalently reformulated as
$\max_{z\in \partial_x f(\hat{x},\hat{y})} \inp{z,v} \geq 0,~\forall v\in \mathcal{T}_{X}(\hat{x})$,
which, based on the definition of normal cone, is equivalent to $0\in \partial_x f(\hat{x},\hat{y}) + \mathcal{N}_X(\hat{x})$.

Generally, \eqref{NonS1st-2} implies the Clarke stationary condition
$
0\in -\partial_y f(\hat{x},\hat{y}) + \mathcal{N}_Y(\hat{y}),
$
but not vice versa. Moreover, by using the (generalized) directional derivatives, we can establish the second-order necessary optimality conditions for the nonsmooth case. Therefore, the (generalized) directional derivatives are employed in Theorem \ref{Th5}.
\end{remark}

\begin{remark}
It is noteworthy that the necessary optimality conditions \eqref{NonS1st-1}-\eqref{NonS1st-2} and \eqref{NonS2ed-1}-\eqref{NonS2ed-2} with respect to $x$ and $y$ are not symmetric. Generally, \eqref{NonS1st-1} and \eqref{NonS2ed-1} are weaker than
\begin{equation}
\label{gs16}
\d_x f(\hat{x},\hat{y};v) \geq 0~\text{for all}~v\in \mathcal{T}_{X}(\hat{x})
\end{equation}
and
\begin{equation}
\label{gs17}
\d_x^2 f(\hat{x},\hat{y};v) \geq 0~\text{for all}~v\in \mathcal{T}_{X}^\circ(\hat{x}) \cap \{v: \d_x f(\hat{x},\hat{y}) (v) =0 \},
\end{equation}
respectively, because $f_x^{\comp}(\hat{x},\hat{y};v) \geq \d_x f(\hat{x},\hat{y};v)$, $f_x^{\comp\comp}(\hat{x},\hat{y};v)\geq \d_x^2 f(\hat{x},\hat{y};v)$ (Remark \ref{Rem1}) and
$$\mathcal{T}_{X}^\circ(\hat{x}) \cap \{v: \exists \delta>0, \d_x f(\hat{x},y') (v) =0,\forall y' \in \mathbb{B}(\hat{y},\delta)\cap Y\} \subseteq \mathcal{T}_{X}^\circ(\hat{x}) \cap \{v: \d_x f(\hat{x},\hat{y}) (v) =0 \}.$$
The main reason is that a local minimax point may not be a local saddle point. If we replace \eqref{NonS1st-1} and \eqref{NonS2ed-1} by \eqref{gs16} and \eqref{gs17} respectively, the necessary optimality conditions for local saddle points are derived.

If, in addition, $f$ is Clarke regular at $(\hat{x},\hat{y})$, \eqref{NonS1st-1} can be replaced by (\ref{gs16}).

If, in addition, $f$ is twice subregular at $(\hat{x},\hat{y})$, \eqref{NonS2ed-1} can be replaced by
$$\d_x^2 f(\hat{x},\hat{y}) (v) \geq 0~\text{for all}~v\in \mathcal{T}_{X}^\circ(\hat{x}) \cap \{v: \exists \delta>0, \d_x f(\hat{x},y') (v) =0,\forall y' \in \mathbb{B}(\hat{y},\delta)\cap Y\}.$$

\end{remark}

Based on Theorem \ref{Th5}, we define the first-order and second-order d-stationary points of min-max problems.

\begin{definition}\label{Def6}
We call that $(\hat{x},\hat{y})\in X\times Y$ is a first-order d-stationary point of problem \eqref{minmax} if it satisfies \eqref{NonS1st-1}-\eqref{NonS1st-2}. If $(\hat{x},\hat{y})$ also satisfies \eqref{NonS2ed-1}-\eqref{NonS2ed-2}, we call it a second-order d-stationary point of problem \eqref{minmax}.
\end{definition}

\begin{example}
Let $X=[-1,1]$ and $Y=[-1,1]$. We consider
$$\min_{x\in [-1,1]}\max_{y\in [-1,1]} f(x,y):=-\abs{x}^9 + \frac{3}{5}\abs{x}^3\abs{y}^3- \abs{y}^5.$$
Take $\tau(\delta)=\frac{3}{5}(\sqrt{\delta})^3$. Then for any $\abs{x}\leq \delta$ and $\abs{y}\leq \delta$ with sufficiently small $\delta\in(0,1)$ we have
\begin{align*}
-\abs{y}^5 = f(0,y) \leq f(0,0) &\leq  \max_{y\in [-\tau(\delta),\tau(\delta)]} -\abs{x}^9 + \frac{3}{5}\abs{x}^3\abs{y}^3- \abs{y}^5=-\abs{x}^9 + \frac{2}{5}\left( \frac{3}{5} \right)^4(\sqrt{\abs{x}})^{15}
\end{align*}
where $\pm\frac{3}{5}(\sqrt{\abs{x}})^3$ is the maximizer of the above maximization problem. This implies that $(0,0)$ is a local minimax point.
Obviously, $f(x,y)$ is not differentiable at $(0,0)$. In what follows, we examine the necessary optimality conditions in Theorem \ref{Th5}. Since $\mathcal{T}_{X}(0) = \mathbb{R}$ and $\mathcal{T}_{Y}(0) = \mathbb{R}$, we have for any $v\in \mathcal{T}_{X}(0)$ that
\begin{align*}
f_x^{\comp}(0,0;v)&= \limsup_{x'\to 0, t\downarrow 0}\frac{-\abs{x'+tv}^9 + \abs{x'}^9}{t}= 0,
\end{align*}
which implies that $f_x^{\comp}(0,0;v)= f'_x(0,0;v)$, i.e., the Clarke regularity holds.

Similarly, we have for any $w\in \mathcal{T}_{Y}(0)$ that
\begin{align*}
\d_y f(0,0) (w) &=  \liminf_{w'\to w, t\downarrow 0} \frac{f(0,t w') - f(0,0)}{t} =\liminf_{w'\to w, t\downarrow 0} \frac{-\abs{tw'}^5}{t} = 0.
\end{align*}

Next consider the second-order optimality conditions. Note that $\mathcal{T}_{X}^\circ(0) =\mathbb{R}$ and for any fixed $y'$, we have
\begin{align*}
\d_x f(0,y') (v) &= \liminf_{v'\to v, t\downarrow 0} \frac{f(t v', y') - f(0,y')}{t} \\
&= \liminf_{v'\to v, t\downarrow 0} \frac{-t^9\abs{ v'}^9 + \frac{3}{5}t^3\abs{v'}^3\abs{y'}^3 - \abs{y'}^5 + \abs{y'}^5 }{t} \\
&=0
\end{align*}
for any $v$, which implies that $\{v: \d_x f(0,y') (v) =0 \} = \mathbb{R}$. Thus, for any $\delta>0$
$$\mathcal{T}_{X}^\circ(0) \cap \{v: \d_x f(0,y') (v) =0,\forall y' \in \mathbb{B}(0,\delta)\cap Y\} = \mathbb{R}.$$
Notice that
\begin{align*}
f_x^{\comp\comp}(0,0;v) &= \limsup_{x' \to 0 \atop  t\downarrow 0, \delta\downarrow 0} \frac{f(x'+ \delta v + t v,0) - f(x'+ \delta v,0) - f(x'+t v,0) + f(x',0)}{\delta t} \\
&= \limsup_{x' \to 0 \atop  t\downarrow 0, \delta\downarrow 0} \frac{-\abs{x'+ \delta v + t v}^9 + \abs{x'+ \delta v}^9 + \abs{x'+t v}^9 - \abs{x'}^9 }{\delta t} \geq 0
\end{align*}
for any $v\in \mathbb{R}$. Similarly, we have $\mathcal{T}_{Y}^\circ(0) \cap \{w: \d_y f(0,0) (w) =0 \} =\mathbb{R}$ and
\begin{align*}
\d_y^2 f(0,0) (w) &= \liminf_{w'\to w, t\downarrow 0} \frac{f(0, t w') - f(0,0) - t \d_y f(0,0)(w')}{\frac{1}{2}t^2}
= \liminf_{w'\to w, t\downarrow 0} \frac{-\abs{t w'}^5}{\frac{1}{2}t^2} = 0
\end{align*}
for any $w\in \mathbb{R}$.
\end{example}

\subsection{Smooth case}
In this subsection, we consider the necessary optimality conditions of problem \eqref{minmax} when $f$ is (twice) continuously differentiable.
For any $(x,y)\in X\times Y$, denote
\begin{align*}
\Gamma_1^\circ(x,y)&=\{v\in \mathcal{T}_X^\circ(x): v\bot \nabla_x f(x,y)\}, \quad \,
\Gamma_1(x,y)=\{v\in \mathcal{T}_X(x): v\bot \nabla_x f(x,y)\},\\
\Gamma_2^\circ(x,y)&=\{w\in \mathcal{T}_Y^\circ(y): w\bot \nabla_y f(x,y)\}, \quad
\Gamma_2(x,y)=\{w\in \mathcal{T}_Y(y): w\bot \nabla_y f(x,y)\}.
\end{align*}

It is noteworthy that $\cl( \Gamma_1^\circ(x,y) ) \neq \Gamma_1(x,y)$ and $\cl( \Gamma_2^\circ(x,y) ) \neq \Gamma_2(x,y)$ generally even if we have $\cl( \mathcal{T}_X^\circ(x) ) = \mathcal{T}_X(x)$ and $\cl( \mathcal{T}_Y^\circ(y) ) = \mathcal{T}_Y(y)$. We summarize their relationships as follows.

\begin{lemma}
\label{Lem1}
Let $(x,y)\in X\times Y$. Then $\Gamma_1^\circ(x,y)$, $\Gamma_1(x,y)$, $\Gamma_2^\circ(x,y)$ and $\Gamma_2(x,y)$ are convex cones, and we have
$$\cl\Gamma_1^\circ(x,y) \subseteq \Gamma_1(x,y)~\text{and}~\cl\Gamma_2^\circ(x,y) \subseteq \Gamma_2(x,y).$$

Moreover, if $X$ and $Y$ are polyhedral, then $\Gamma_1^\circ(x,y)= \cl\Gamma_1^\circ(x,y) = \Gamma_1(x,y)$ and $\Gamma_2^\circ(x,y) = \cl\Gamma_2^\circ(x,y) = \Gamma_2(x,y)$.
\end{lemma}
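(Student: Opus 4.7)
The strategy is to reduce every claim to properties of the tangent cones $\mathcal{T}_X(x)$, $\mathcal{T}_X^\circ(x)$ (and their $Y$-counterparts) that have already been established in the discussion preceding Lemma \ref{Lem1}, together with the elementary fact that the orthogonal complement of any vector is a closed linear subspace. Since the assertions for the pair $\Gamma_1^\circ,\Gamma_1$ and for the pair $\Gamma_2^\circ,\Gamma_2$ are completely symmetric, I would only carry out the argument for the first pair; the second follows by interchanging the roles of $(x,X,\nabla_x f)$ and $(y,Y,\nabla_y f)$.

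For the convex cone assertion, I would recall that the paper already states that $\mathcal{T}_X(x)$ and $\mathcal{T}_X^\circ(x)$ are convex cones when $X$ is convex, and observe that $\{v:\langle v,\nabla_x f(x,y)\rangle=0\}$ is a linear subspace, hence a convex cone. Intersections of convex cones are convex cones, which proves the first claim. For the inclusion $\cl\Gamma_1^\circ(x,y)\subseteq\Gamma_1(x,y)$, I would note that $\mathcal{T}_X(x)$ is closed (immediate from its sequential definition) and the orthogonality constraint is closed, so $\Gamma_1(x,y)$ itself is closed; since $\mathcal{T}_X^\circ(x)\subseteq\mathcal{T}_X(x)$ one has $\Gamma_1^\circ(x,y)\subseteq\Gamma_1(x,y)$, and taking closure preserves this inclusion.

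For the polyhedral case, I would invoke the fact stated just below Lemma \ref{Lem3}, namely $\mathcal{T}_X^\circ(x)=\mathcal{T}_X(x)$ whenever $X$ is polyhedral. This immediately yields $\Gamma_1^\circ(x,y)=\Gamma_1(x,y)$, and the closedness of $\Gamma_1(x,y)$ established above then forces $\Gamma_1^\circ(x,y)=\cl\Gamma_1^\circ(x,y)=\Gamma_1(x,y)$. There is no single hard step here: the lemma is essentially bookkeeping, and the only subtlety is to keep the distinction between $\mathcal{T}_X^\circ$ (the raw cone of feasible directions, not necessarily closed) and $\mathcal{T}_X$ (its closure) straight. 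The reason $\cl\Gamma_1^\circ$ need not equal $\Gamma_1$ in general is that intersecting with a hyperplane can cut off a piece of $\mathcal{T}_X^\circ$ whose closure lies in the wider set $\mathcal{T}_X$ but was never approachable from within $\mathcal{T}_X^\circ$ itself; when $X$ is polyhedral this defect disappears precisely because $\mathcal{T}_X^\circ=\mathcal{T}_X$ already holds at the level of tangent cones.
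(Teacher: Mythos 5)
Your proof is correct and follows exactly the route the paper intends: the paper omits the argument, noting only that it ``is based on Lemma \ref{Lem3},'' and your reduction to the convex-cone/closedness properties of $\mathcal{T}_X(x)$ and $\mathcal{T}_X^\circ(x)$ together with the identity $\mathcal{T}_X^\circ(x)=\mathcal{T}_X(x)$ in the polyhedral case is precisely that intended argument. Your closing remark on why $\cl\Gamma_1^\circ$ can differ from $\Gamma_1$ even when $\cl\,\mathcal{T}_X^\circ(x)=\mathcal{T}_X(x)$ also correctly explains the caveat the paper states just before the lemma.
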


The proof is based on Lemma \ref{Lem3}, and thus we omit it here.

\begin{theorem}\label{Th1}
Let $f$ be continuously differentiable and the tuple $(\hat{x},\hat{y})\in X\times Y$ be a local minimax point of problem \eqref{minmax}.
\begin{enumerate}
\item[(i)] Then it holds that
\begin{subequations}
\label{gs2}
\begin{align}
0&\in \nabla_x f(\hat{x},\hat{y}) + \mathcal{N}_{X}(\hat{x}),\label{gs2-1}\\
0&\in - \nabla_y f(\hat{x},\hat{y}) + \mathcal{N}_{Y}(\hat{y}).\label{gs2-2}
\end{align}
\end{subequations}

\item[(ii)]  Assume, further, that $f$ is twice continuously differentiable. Then
\begin{subequations}
\label{gs6}
\begin{align}
&\inp{v, \nabla_{xx}^2 f(\hat{x},\hat{y}) v} \geq 0~\text{for all}~v\in \cl \left\{\bar{v}: \exists \delta>0, \bar{v} \in \Gamma_1^\circ(\hat{x},y'), \forall y'\in \mathbb{B}(\hat{y},\delta) \right\},\label{gs6-1}\\
&\inp{w, \nabla_{yy}^2 f(\hat{x},\hat{y}) w} \leq 0~\text{for all}~w\in \cl\Gamma_2^\circ(\hat{x},\hat{y}).\label{gs6-2}
\end{align}
\end{subequations}
\end{enumerate}
\end{theorem}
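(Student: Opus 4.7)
The plan is to derive Theorem \ref{Th1} as a direct specialization of Theorem \ref{Th5} to the smooth setting, using the reductions recorded in the remark on Clarke (second-order) directional derivatives of smooth functions. Namely, when $f$ is $C^1$ at $(\hat{x},\hat{y})$ we have $f_x^{\comp}(\hat{x},\hat{y};v) = \d_x f(\hat{x},\hat{y})(v) = \inp{\nabla_x f(\hat{x},\hat{y}),v}$ and $\d_y f(\hat{x},\hat{y})(w) = \inp{\nabla_y f(\hat{x},\hat{y}),w}$, and in the $C^2$ case $f_x^{\comp\comp}(\hat{x},\hat{y};v) = v^\top \nabla_{xx}^2 f(\hat{x},\hat{y}) v$ together with the analogous identity in $y$. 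Moreover every $C^1$ function is Clarke regular, so the regularity hypothesis required by Theorem \ref{Th5}(ii) is automatic, and $C^2$ regularity yields twice semidifferentiability.

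For part (i) I would feed these identities into Theorem \ref{Th5}(i). Condition \eqref{NonS1st-1} becomes $\inp{\nabla_x f(\hat{x},\hat{y}),v}\geq 0$ for every $v\in\mathcal{T}_X(\hat{x})$, equivalently $\inp{-\nabla_x f(\hat{x},\hat{y}),v}\leq 0$ on $\mathcal{T}_X(\hat{x})$, which by the polarity characterisation of $\mathcal{N}_X(\hat{x})$ recalled after Definition of the normal cone gives $-\nabla_x f(\hat{x},\hat{y})\in\mathcal{N}_X(\hat{x})$, i.e.\ \eqref{gs2-1}. Condition \eqref{NonS1st-2} similarly yields $\inp{\nabla_y f(\hat{x},\hat{y}),w}\leq 0$ on $\mathcal{T}_Y(\hat{y})$, hence $\nabla_y f(\hat{x},\hat{y})\in\mathcal{N}_Y(\hat{y})$ and \eqref{gs2-2} follows.

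For part (ii) I would similarly transcribe Theorem \ref{Th5}(ii). Using the smooth reductions, \eqref{NonS2ed-1} becomes $\inp{v,\nabla_{xx}^2 f(\hat{x},\hat{y})v}\geq 0$ for every $v$ lying in
$$\mathcal{T}_X^\circ(\hat{x}) \cap \{v:\exists\,\delta>0,\ \inp{\nabla_x f(\hat{x},y'),v}=0,\ \forall y'\in\mathbb{B}(\hat{y},\delta)\cap Y\},$$
which by the definition of $\Gamma_1^\circ$ is exactly the set $\{\bar{v}:\exists\,\delta>0,\ \bar{v}\in\Gamma_1^\circ(\hat{x},y'),\ \forall y'\in\mathbb{B}(\hat{y},\delta)\}$ appearing inside the closure in \eqref{gs6-1}. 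Since the quadratic form $v\mapsto \inp{v,\nabla_{xx}^2 f(\hat{x},\hat{y})v}$ is continuous, the nonnegativity extends automatically to the closure, giving \eqref{gs6-1}. The same continuity-plus-substitution argument applied to \eqref{NonS2ed-2} produces $\inp{w,\nabla_{yy}^2 f(\hat{x},\hat{y})w}\leq 0$ on $\Gamma_2^\circ(\hat{x},\hat{y})$ and therefore, by continuity, on $\cl\Gamma_2^\circ(\hat{x},\hat{y})$, establishing \eqref{gs6-2}.

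The only step that requires any care is the second-order one: the passage from the index set of \eqref{NonS2ed-1} to the closure-of-$\Gamma_1^\circ$-family description in \eqref{gs6-1} must be justified by unwinding the definition of $\Gamma_1^\circ(\hat{x},y')$ and noting that the orthogonality condition $v\bot \nabla_x f(\hat{x},y')$ is literally the same as $\d_x f(\hat{x},y')(v)=0$ in the smooth case; the closure then comes for free from continuity of the Hessian quadratic form. Everything else is a routine substitution of smooth identities into the conclusions already proved in Theorem \ref{Th5}.
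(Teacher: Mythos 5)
Your proposal is correct, but it takes a different route from the paper. The paper does not deduce Theorem \ref{Th1} from Theorem \ref{Th5}; it gives a short self-contained argument: for (i) it picks $x^k\overset{X}{\to}\hat{x}$, takes $\tilde y^k$ as in \eqref{gs10}, applies the mean value theorem to $f(x^k,\tilde y^k)-f(\hat x,\tilde y^k)\ge 0$ and passes to the limit to conclude $-\nabla_x f(\hat x,\hat y)\in\mathcal{N}_X(\hat x)$; for (ii) it uses a second-order Taylor expansion along $x^k=\hat x+t_k v$ with $v\in\Gamma_1^\circ(\hat x,y')$, kills the first-order term via $\nabla_x f(\hat x,\tilde y^k)^\top v=0$ for large $k$, and divides by $t_k^2$. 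Your derivation instead feeds the smooth-case identities ($f_x^{\comp}=\nabla_x f^\top v$, $f_x^{\comp\comp}=v^\top\nabla_{xx}^2 f\,v$, automatic Clarke regularity and twice semidifferentiability of $C^2$ functions) into the already-proved Theorem \ref{Th5}, then uses the polarity $\mathcal{N}_X(\hat x)=\mathcal{T}_X(\hat x)^\circ$ and continuity of the Hessian quadratic form to pass to closures. This is legitimate and arguably cleaner in that it makes the logical dependence between the two theorems explicit; what the paper's direct proof buys is independence from the identity $f_x^{\comp\comp}(\hat x,\hat y;v)=v^\top\nabla_{xx}^2f(\hat x,\hat y)v$ and the Clarke mean-value machinery, replacing them with elementary Taylor expansions. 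One small point you should make explicit: the index set in \eqref{NonS2ed-1} quantifies over $y'\in\mathbb{B}(\hat y,\delta)\cap Y$ while the set inside the closure in \eqref{gs6-1} quantifies over all $y'\in\mathbb{B}(\hat y,\delta)$; the latter set is therefore contained in the former, so the inequality transfers in the right direction, but this containment deserves a sentence rather than the claim that the two sets are ``exactly'' the same.
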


\begin{proof}
(i) The proof is similar to Theorem \ref{Th5}. Here we give a simple proof of \eqref{gs2-1} and \eqref{gs6-1} for completeness. For any $x^k\overset{X}{\to}\hat{x}$ as $k\to \infty$, denote $\delta_k=\norm{x^k-\hat{x}}$ and $\tilde{y}^k$ is defined in \eqref{gs10}.
Obviously, $\delta_k\to 0$ and $\norm{\tilde{y}^k \to \hat{y}}\to 0$ as $k\to \infty$. From the continuous differentiability of $f$, we have
\begin{align*}
0 \leq f(x^k, \tilde{y}^k) - f(\hat{x}, \tilde{y}^k)
= \nabla f(\bar{x}^k, \tilde{y}^k)^\top \begin{pmatrix} x^k-\hat{x}\\ \tilde{y}^k-\tilde{y}^k \end{pmatrix}
= \nabla_x f(\hat{x}, \hat{y})^\top (x^k-\hat{x})   + o\left(\norm{x^k-\hat{x}}\right),
\end{align*}
where $\bar{x}^k$ is some point lying in the segment between $\hat{x}$ and $x^k$. 
Thus, we obtain
$$-\nabla_x f(\hat{x}, \hat{y})^\top (x^k-\hat{x}) \leq  o\left(\norm{x^k-\hat{x}}\right).$$
We know from \cite[Definition 6.3]{RW2009variational} that $-\nabla_x f(\hat{x}, \hat{y}) \in \mathcal{N}_X(\hat{x})$,
which verifies \eqref{gs2-1}.

(ii) We only need to prove that \eqref{gs6-1} holds with $v\in \Gamma_1^\circ(\hat{x},y')$ for all $y'\in \mathbb{B}(\hat{y},\delta)$ and some $\delta>0$. According to the definition of $\mathcal{T}_X^\circ(\hat{x})$, there exists a sequence $\{t_k\}_{k\geq 1}$ with $t_k \downarrow 0$ as $k\to \infty$, such that $x^k:=\hat{x} + t_kv \in X$. Let $\delta_k=t_k\norm{v}$, and $\tilde{y}^k$ is denoted in \eqref{gs10}. Similarly, we have that
\begin{align*}
0 & \leq  f(x^k, \tilde{y}^k) - f(\hat{x}, \tilde{y}^k) \\
&\overset{(a)}{=}\nabla_x f(\hat{x}, \tilde{y}^k)^\top (x^k-\hat{x}) + \frac{1}{2} (x^k-\hat{x})^\top \nabla_{xx}^2 f(\tilde{x}^k, \tilde{y}^k)(x^k-\hat{x}) \\
&\overset{(b)}{=}\nabla_x f(\hat{x}, \tilde{y}^k)^\top (x^k-\hat{x}) + \frac{1}{2} (x^k-\hat{x})^\top \nabla_{xx}^2 f(\hat{x}, \hat{y})(x^k-\hat{x}) + o\left(\norm{x^k-\hat{x}}^2\right),
\end{align*}
where (a) follows from Taylor's theorem for multivariate functions with Lagrange's remainder, and $\tilde{x}^k$ is some point lying in the segment between $\hat{x}$ and $x^k$; (b) follows from the twice continuous differentiability of $f$ and $\tilde{x}^k \to \hat{x}$ as $k\to \infty$. Thus, we obtain
\begin{align*}
t_k \nabla_x f(\hat{x}, \tilde{y}^k)^\top v + t_k^2 \frac{1}{2} v^\top \nabla_{xx}^2 f(\hat{x}, \hat{y})v + \norm{v}^2o(t_k^2)\geq 0.
\end{align*}
Since  $\nabla_x f(\hat{x}, \tilde{y}^k)^\top v = 0$ for sufficiently large $k$,
dividing by $t_k^2$ in both sides and letting $k\to \infty$, we complete the proof.
\end{proof}

\begin{remark}
Conversely, if these conditions in (ii) of Theorem \ref{Th1} hold except that $\cl \left\{w: \exists \delta>0, w \in \Gamma_1^\circ(\hat{x},y'), \forall y'\in \mathbb{B}(\hat{y},\delta) \right\}$ and $\cl\Gamma_2^\circ(\hat{x},\hat{y})$ are replaced by $\Gamma_1(\hat{x},\hat{y})$ and $\Gamma_2(\hat{x},\hat{y})$, respectively, and the inequality is strict when $v\neq 0$ and $w\neq 0$, then $(\hat{x},\hat{y})$ is a local saddle point. In that case, \eqref{gs6} together with \eqref{gs2} are the so-called second-order sufficient condition for a local saddle point.

If both $X$ and $Y$ are polyhedral, We can replace
$$\cl \left\{w: \exists \delta>0, w \in \Gamma_1^\circ(\hat{x},y'), \forall y'\in \mathbb{B}(\hat{y},\delta) \right\} \quad {\rm and} \quad  \cl\Gamma_2^\circ(\hat{x},\hat{y})$$
in Theorem \ref{Th1} by $\cl \left\{w: \exists \delta>0, w \in \Gamma_1(\hat{x},y'), \forall y'\in \mathbb{B}(\hat{y},\delta) \right\}$ and $\Gamma_2(\hat{x},\hat{y})$, respectively (see Lemma \ref{Lem1}).
\end{remark}

\begin{corollary}
Let $f$ be twice continuously differentiable. If, further, for local minimax point $(\hat{x},\hat{y})$, there exists an $\tau$ such that $\tau(\delta)=O(\delta)$ as $\delta\downarrow 0$, then \eqref{gs6-1} can be replaced by
\begin{equation*}
\inp{v, \nabla_{xx}^2 f(\hat{x},\hat{y}) v} \geq 0~\text{for all}~v\in \cl \Gamma_1^\circ(\hat{x},\hat{y}).
\end{equation*}
\end{corollary}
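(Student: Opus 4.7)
The plan is to adapt the argument used to prove Theorem \ref{Th1}(ii), exploiting the stronger assumption $\tau(\delta) = O(\delta)$ to handle the cross-coupling between the $x$-perturbation and the $y$-maximization. By continuity of $\nabla_{xx}^2 f$ and the closure operation, it suffices to verify $\inp{v, \nabla_{xx}^2 f(\hat x, \hat y) v} \geq 0$ on the dense subset $v \in \Gamma_1^\circ(\hat x, \hat y)$, so fix such a $v$, i.e., $\nabla_x f(\hat x, \hat y)^\top v = 0$ and $\hat x + \lambda v \in X$ for some $\lambda > 0$.

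Choose $t_k \downarrow 0$ with $x^k := \hat x + t_k v \in X$, set $\delta_k := t_k \|v\|$, and let $\tilde y^k$ be the maximizer appearing in \eqref{gs10}. The new assumption $\tau(\delta) = O(\delta)$ gives $\|\tilde y^k - \hat y\| \leq \tau(\delta_k) = O(t_k)$, so in particular $\tilde y^k \to \hat y$. Since $\hat y$ is a local maximizer of $f(\hat x, \cdot)$ by Proposition \ref{Prop1}(ii), for large $k$ we have $f(\hat x, \tilde y^k) \leq f(\hat x, \hat y) \leq f(x^k, \tilde y^k)$, and subtracting yields $f(x^k, \tilde y^k) - f(\hat x, \tilde y^k) \geq 0$.

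Applying the second-order Taylor expansion in the $x$-direction at $(\hat x, \tilde y^k)$, using continuity of $\nabla_{xx}^2 f$, the linear cancellation $\nabla_x f(\hat x, \hat y)^\top v = 0$, and the $C^1$ expansion $\nabla_x f(\hat x, \tilde y^k)^\top v = o(1)$ as $\tilde y^k \to \hat y$, we arrive at
\begin{equation*}
0 \leq t_k \nabla_x f(\hat x, \tilde y^k)^\top v + \tfrac{t_k^2}{2} v^\top \nabla_{xx}^2 f(\hat x, \hat y) v + o(t_k^2).
\end{equation*}
The main obstacle is controlling the linear term $t_k \nabla_x f(\hat x, \tilde y^k)^\top v$: in Theorem \ref{Th1}(ii) this term vanishes exactly because of the stronger null-space assumption on $v$, whereas here one only has $\nabla_x f(\hat x, \tilde y^k)^\top v = v^\top \nabla_{yx}^2 f(\hat x, \hat y)(\tilde y^k - \hat y) + o(\|\tilde y^k-\hat y\|)$. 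The strategy is to exploit the first-order optimality condition of $\tilde y^k$ as a maximizer of $f(x^k, \cdot)$ over $\mathbb{B}(\hat y, \tau(\delta_k)) \cap Y$, together with the normal-cone inclusion $\nabla_y f(\hat x, \hat y) \in \mathcal{N}_Y(\hat y)$, to force $(\tilde y^k - \hat y)/t_k$ (up to subsequence) to a limit $\zeta$ that absorbs the extra contribution through the second-order condition \eqref{gs6-2} applied to directions in $\cl\Gamma_2^\circ(\hat x,\hat y)$.

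Dividing the displayed inequality by $t_k^2/2$ and passing to the limit along such a subsequence then leaves only the term $v^\top \nabla_{xx}^2 f(\hat x, \hat y) v$ on the right-hand side, giving $v^\top \nabla_{xx}^2 f(\hat x, \hat y) v \geq 0$. The inequality extends to all $v \in \cl \Gamma_1^\circ(\hat x, \hat y)$ by continuity of the quadratic form, completing the proof. The hardest step is the extraction and handling of the limit direction $\zeta$: it requires a careful analysis of where $\tilde y^k$ sits relative to the boundary of $\mathbb{B}(\hat y, \tau(\delta_k)) \cap Y$ in order to convert the $O(t_k^2)$ cross term into a nonpositive contribution at the limit.
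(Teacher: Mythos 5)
Your diagnosis of where the difficulty lies is exactly right, but the step you leave open is a genuine gap, and I do not see how to close it along the route you sketch. After dividing your displayed inequality by $t_k^2/2$ and extracting a convergent subsequence $(\tilde y^k-\hat y)/t_k \to \zeta$ (which exists because $\norm{\tilde y^k-\hat y}\le\tau(\delta_k)=O(t_k)$), what survives is
\[
0 \;\le\; 2\, v^\top \nabla_{xy}^2 f(\hat x,\hat y)\,\zeta \;+\; v^\top\nabla_{xx}^2 f(\hat x,\hat y)\,v ,
\]
so you need $v^\top\nabla_{xy}^2 f(\hat x,\hat y)\,\zeta\le 0$. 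Condition \eqref{gs6-2} controls only the quadratic form of $\nabla_{yy}^2 f$ and gives no handle on the mixed second derivative, and the optimality of $\tilde y^k$ in \eqref{gs10} drives $\tilde y^k$ toward directions along which $f(x^k,\cdot)$ increases, which tends to make the cross term \emph{positive}. Example \ref{Eg1} of the paper makes this concrete: for $f(x,y)=-x^2+5xy-y^2$ at $(\hat x,\hat y)=(0,0)$ with $\tau(\delta)=\tfrac52\delta$, one has $\tilde y^k=\tfrac52 t_k v$, hence $\zeta=\tfrac52 v$, $\nabla_{xy}^2 f=5$, and the cross term equals $\tfrac{25}{2}v^2>0$; meanwhile $\cl\Gamma_1^\circ(0,0)=\mathbb{R}$ and $\nabla_{xx}^2 f(0,0)=-2$. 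So no subsequence makes the cross term nonpositive there, and the target inequality $v^\top\nabla_{xx}^2 f(\hat x,\hat y)v\ge 0$ itself fails in that example, so no argument of this shape can succeed without further hypotheses.

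For comparison, the paper's own proof follows the same Taylor-expansion route you use (expanding both $f(x^k,\tilde y^k)$ and $f(\hat x,\tilde y^k)$ about $(\hat x,\hat y)$ rather than expanding only in $x$ at $(\hat x,\tilde y^k)$, which is an immaterial difference), but in its final display it silently moves the cross term $(x^k-\hat x)^\top\nabla_{xy}^2 f(\hat x,\hat y)(\tilde y^k-\hat y)$ --- which is $O(t_k^2)$ precisely because $\tau(\delta)=O(\delta)$, not $o(t_k^2)$ --- into the remainder. In other words, the paper does not confront the obstacle you correctly isolate. Your write-up is therefore more candid about the crux than the published proof, but the proposed ``absorption via \eqref{gs6-2}'' does not supply the missing cancellation; what the cross term actually contributes in the nondegenerate unconstrained setting is the Schur-complement condition $\nabla_{xx}^2 f-\nabla_{xy}^2 f[\nabla_{yy}^2 f]^{-1}\nabla_{yx}^2 f\succeq 0$ of \cite{JNJ2020local}, not positive semidefiniteness of $\nabla_{xx}^2 f$ on $\cl\Gamma_1^\circ(\hat x,\hat y)$ alone.
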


\begin{proof}
Let $0\neq v\in\Gamma_1^\circ(\hat{x},\hat{y})$. According to the definition of $\mathcal{T}_X^\circ(\hat{x})$, there exists a sequence $\{t_k\}_{k\geq 1}$ with $t_k \downarrow 0$ as $k\to \infty$, such that $x^k:=\hat{x} + t_kv \in X$. Let $\delta_k:=\norm{x^k -\hat{x}}$, and $\tilde{y}^k$ be denoted in \eqref{gs10}. Since $\tau(\delta)=O(\delta)$ as $\delta\downarrow 0$, we have $\norm{\tilde{y}^k - \hat{y}} =O\left(\norm{x^k - \hat{x}}\right)$ for sufficiently large $k$.
We know from the twice continuous differentiability of $f$ that
\begin{align*}
f(x^k, \tilde{y}^k) &=  f(\hat{x},\hat{y}) + \nabla_x f(\hat{x},\hat{y})^\top (x^k-\hat{x}) + \nabla_y f(\hat{x},\hat{y})^\top (\tilde{y}^k-\hat{y}) \\
&~~~+\frac{1}{2} (x^k-\hat{x})^\top \nabla_{xx}^2 f(\hat{x}, \hat{y})(x^k-\hat{x}) + (x^k-\hat{x})^\top \nabla_{xy}^2 f(\hat{x}, \hat{y})(\tilde{y}^k-\hat{y}) \\
&~~~+ \frac{1}{2} (\tilde{y}^k-\hat{y})^\top \nabla_{yy}^2 f(\hat{x}, \hat{y})(\tilde{y}^k-\hat{y}) + o\left(\norm{x^k-\hat{x}}^2 + \norm{\tilde{y}^k-\hat{y}}^2 \right), \\
f(\hat{x}, \tilde{y}^k) &=f(\hat{x},\hat{y}) + \nabla_y f(\hat{x},\hat{y})^\top (\tilde{y}^k-\hat{y}) + \frac{1}{2} (\tilde{y}^k-\hat{y})^\top \nabla_{yy}^2 f(\hat{x}, \hat{y})(\tilde{y}^k-\hat{y}) \\
&~~~+ o\left(\norm{\tilde{y}^k-\hat{y}}^2 \right).
\end{align*}
Using $t_k\nabla_x f(\hat{x}, \hat{y})^\top v =\nabla_x f(\hat{x}, \hat{y})^\top (x^k-\hat{x})= 0$  for  $ v\in\Gamma_1^\circ(\hat{x},\hat{y})$,  we have
\begin{align*}
0 &\leq f(x^k, \tilde{y}^k) - f(\hat{x}, \tilde{y}^k) \\
&=\frac{1}{2} (x^k-\hat{x})^\top \nabla_{xx}^2 f(\hat{x}, \hat{y})(x^k-\hat{x}) + (x^k-\hat{x})^\top \nabla_{xy}^2 f(\hat{x}, \hat{y})(\tilde{y}^k-\hat{y})  \\
&~~~+ o\left(\norm{x^k-\hat{x}}^2 + \norm{\tilde{y}^k-\hat{y}}^2 \right) - o\left(\norm{\tilde{y}^k-\hat{y}}^2 \right)\\
&\overset{(a)}{=} \frac{1}{2} (x^k-\hat{x})^\top \nabla_{xx}^2 f(\hat{x}, \hat{y})(x^k-\hat{x}) + (x^k-\hat{x})^\top \nabla_{xy}^2 f(\hat{x}, \hat{y})(\tilde{y}^k-\hat{y})+ o\left(\norm{x^k-\hat{x}}^2 \right) \\
&= t_k^2 \frac{1}{2} v^\top \nabla_{xx}^2 f(\hat{x}, \hat{y})v + o(t_k^2),
\end{align*}
where (a) follows from the fact that $\norm{\tilde{y}^k - \hat{y}} = O\left(\norm{x^k - \hat{x}}\right)$ for sufficiently large $k$.
Dividing by $t_k^2$ in both sides and letting $t_k\to 0$, we
complete the proof.
\end{proof}

The following example illustrates $\cl \left\{w: w \in \Gamma_1^\circ(\hat{x},y'), \forall y'\in \mathbb{B}(\hat{y},\delta) \right\}$ for some $\delta>0$.

\begin{example}
Let $n=m=1$, $X = Y = [-1,1]$.
Consider
\begin{equation*}
\min_{x\in [-1,1]} \max_{y\in [-1,1]}~ f(x,y):= -x^4 + 4x^2y^2 -y^4.
\end{equation*}
We have
\begin{equation*}
\varphi(x)= \max_{y\in[-1,1]} (-x^4 + 4x^2y^2 -y^4) =
\begin{cases}
3 x^4, & x\in \left[-\frac{\sqrt{2}}{2}, \frac{\sqrt{2}}{2}\right]~(y^*=\pm \sqrt{2}x);\\
-x^4 + 4x^2 - 1, & [-1,1]\big\backslash \left[-\frac{\sqrt{2}}{2}, \frac{\sqrt{2}}{2}\right]~(y^*=1),
\end{cases}
\end{equation*}
which is not a convex function over $[-1,1]$.
Moreover, it can be examined that $(0,0)$ is a global minimax point. In fact, it is also a local minimax point. Let $\tau(\delta)=2\delta^2$ and $\delta_0=\frac{\sqrt{2}}{2}$. Then, for any $\delta\in (0,\delta_0]$ and any $(x,y)\in [-1,1]^2$ satisfying $\abs{x}\leq \delta$ and $\abs{y}\leq \delta$, we have
$$-y^4=f(0,y) \leq f(0,0) \leq \max_{y'\in\{y\in Y: \abs{y}\leq \tau(\delta)\}}f(x,y')=3x^4.$$

Therefore, for any $\delta\in(0,1]$,
\begin{align*}
\cl \left\{w: w \in \Gamma_1^\circ(0,y'), \forall y'\in \mathbb{B}(0,\delta) \right\}
&= \cl \left(\bigcap_{y'\in \mathbb{B}(0,\delta)}\{w_1\in \mathcal{T}_{[-1,1]}^\circ(0): w_1\bot \nabla_x f(0,y')\} \right)= \mathbb{R}.
\end{align*}
Similarly, we have
$$\cl\Gamma_2^\circ(0,0)=\left\{w_2\in \mathcal{T}_{[-1,1]}^\circ(0): w_2\bot \nabla_y f(0,0) \right\} = \mathbb{R}.$$

In this case, the second-order optimality  condition \eqref{gs6} means
$\nabla_{xx}^2 f(0,0)\ge 0 $ and $ \nabla_{yy}^2 f(0,0)\le 0$.
\end{example}

In Theorem \ref{Th1}, the first-order and second-order optimality necessary conditions are given in a sense of geometry. With specific structures of sets $X$ and $Y$, we can derive the corresponding Karush-Kuhn-Tucker (KKT) systems. Specially, we assume that $X$ and $Y$ are polyhedral defined as follows:
\begin{equation}
\label{gs3}
X=\{x\in\mathbb{R}^n:Ax\leq b\}~\text{and}~Y=\{y\in\mathbb{R}^m:Cy\leq d\},
\end{equation}
where $A\in \mathbb{R}^{p\times n}$, $b\in \mathbb{R}^{p}$, $C\in \mathbb{R}^{q\times m}$ and $d\in \mathbb{R}^{q}$.

The following proposition establishes the relationship between tangent/normal cones and algebra systems when $X$ and $Y$ are defined in \eqref{gs3}.

\begin{proposition}[\cite{FP2007finite}]\label{Prop6}
Let $X$ and $Y$ be defined in \eqref{gs3}. Then,

\begin{align*}
\mathcal{T}_X(x) &= \left\{ \lambda \in \mathbb{R}^n: -A_i^\top \lambda \geq 0 ~\text{for}~ \forall i\in \mathcal{A}_X(x) \right\},\\
\mathcal{T}_Y(y) &= \left\{ \mu \in \mathbb{R}^m: -C_j^\top \mu \geq 0 ~\text{for}~ \forall j\in \mathcal{A}_Y(y) \right\},\\
\mathcal{N}_X(x) &= \left\{ -\sum_{i=1}^p \alpha_i A_i: \alpha \in \mathcal{N}_{\mathbb{R}_+^p}(b - Ax)\right\},\\
\mathcal{N}_Y(y) &= \left\{ -\sum_{j=1}^q \beta_j C_j: \beta \in \mathcal{N}_{\mathbb{R}_+^q}(d - Cy)\right\},
\end{align*}
where $A_i$ is the $i$th row vector of matrix $A$ and $C_j$ is the $j$th row vector of matrix $C$ respectively for $i=1,\cdots,p$ and $j=1,\cdots,q$, and $\mathcal{A}_X(x)$ and $\mathcal{A}_Y(y)$ are active sets of $X$ at $x$ and $Y$ at $y$, respectively.
\end{proposition}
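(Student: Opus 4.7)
The plan is to prove both parts for $X$ (the argument for $Y$ is verbatim) in two stages: first establish the tangent cone formula from the definition together with polyhedrality, then obtain the normal cone formula by polar duality via Farkas' lemma, and finally rewrite the nonnegativity--complementarity conditions on the multipliers as membership in $\mathcal{N}_{\mathbb{R}_+^p}(b-Ax)$.

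For the tangent cone, I would start from Lemma \ref{Lem3}, which gives $\mathcal{T}_X(x)=\cl \mathcal{T}_X^\circ(x)$. Because $X$ is polyhedral, the remark following Lemma \ref{Lem3} applies and the closure is superfluous, so $\mathcal{T}_X(x)=\{\lambda\in\mathbb{R}^n:\exists\, t>0,\ x+t\lambda\in X\}$. Now $x+t\lambda\in X$ amounts to $A_i x+tA_i\lambda\le b_i$ for every row $i$. For $i\notin \mathcal{A}_X(x)$ the slack $b_i-A_i x$ is strictly positive, so the inequality holds automatically for all sufficiently small $t>0$ and imposes no constraint on $\lambda$. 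For $i\in \mathcal{A}_X(x)$ the slack is zero, forcing $A_i\lambda\le 0$, i.e., $-A_i^\top\lambda\ge 0$. This yields exactly the stated description of $\mathcal{T}_X(x)$.

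For the normal cone, I would use $\mathcal{N}_X(x)=\mathcal{T}_X(x)^\circ$, which is immediate from the dual characterization recalled in Section \ref{Sec2}. Writing the tangent cone in the form just derived, a vector $v$ lies in $\mathcal{N}_X(x)$ iff $\inp{v,\lambda}\le 0$ for every $\lambda$ satisfying $A_i\lambda\le 0$ for all $i\in\mathcal{A}_X(x)$. This is precisely the hypothesis of Farkas' lemma applied to the finite family $\{A_i^\top:i\in\mathcal{A}_X(x)\}$, and the conclusion is that $-v=\sum_{i\in\mathcal{A}_X(x)}\gamma_i A_i^\top$ for some $\gamma_i\ge 0$. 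Extending $\gamma$ by zero on inactive indices and setting $\alpha_i:=-\gamma_i$ gives $v=-\sum_{i=1}^p\alpha_i A_i$ with $\alpha_i\le 0$ and $\alpha_i(b_i-A_i x)=0$ for every $i$.

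The remaining step is to recognize the componentwise sign-plus-complementarity condition on $\alpha$ as $\alpha\in\mathcal{N}_{\mathbb{R}_+^p}(b-Ax)$. Writing $z:=b-Ax\ge 0$, the normal cone of $\mathbb{R}_+^p$ at $z$ is $\{u\in\mathbb{R}^p:u\le 0,\ \inp{u,z}=0\}$, which one verifies directly from $\inp{u,w-z}\le 0$ for all $w\ge 0$: take $w=0$ and $w=z+e_i$ to pin down $\inp{u,z}\ge 0$ and $u_i\le 0$ respectively, then combine with $\inp{u,z}\le 0$. With this identification the description of $\mathcal{N}_X(x)$ in the proposition follows. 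The only genuinely nontrivial ingredient is the Farkas step; everything else is unwinding definitions and using polyhedrality to dispose of the closure in Lemma \ref{Lem3}.
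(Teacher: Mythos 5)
Your argument is correct in substance. Note, though, that the paper offers no proof of this proposition at all: it is simply quoted from Facchinei--Pang \cite{FP2007finite}. What you have written is the standard textbook derivation that the cited reference itself uses --- compute $\mathcal{T}_X^\circ(x)$ directly from the finitely many linear inequalities (inactive constraints have positive slack and impose nothing for small $t$, active ones force $A_i\lambda\le 0$), observe that the resulting set is already closed so Lemma \ref{Lem3} (or the polyhedrality remark following it) removes the closure, pass to the normal cone via the polarity $\mathcal{N}_X(x)=\mathcal{T}_X(x)^\circ$ recalled in Section \ref{Sec2}, apply Farkas' lemma, and identify the sign-plus-complementarity conditions on the multipliers with $\alpha\in\mathcal{N}_{\mathbb{R}_+^p}(b-Ax)$. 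All of these steps are sound, and your verification of $\mathcal{N}_{\mathbb{R}_+^p}(z)=\{u:u\le 0,\ \inp{u,z}=0\}$ is complete.

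One small inconsistency to fix in the write-up: the Farkas step should conclude $v=\sum_{i\in\mathcal{A}_X(x)}\gamma_i A_i^\top$ with $\gamma_i\ge 0$ (the polar of $\{\lambda: A_i\lambda\le 0,\ i\in\mathcal{A}_X(x)\}$ is the cone \emph{generated by} the $A_i^\top$, not its negative), whereas you wrote $-v=\sum\gamma_i A_i^\top$. With your stated premise, the substitution $\alpha_i:=-\gamma_i$ would yield $v=\sum\alpha_i A_i$, not the claimed $v=-\sum\alpha_i A_i$. The final formula you report is the correct one and matches the proposition, so this is a sign slip in the intermediate line rather than a gap in the argument, but as written the two displayed equations contradict each other.
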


\begin{theorem}\label{Th3}
Let the tuple $(\hat{x},\hat{y})\in X\times Y$ be a local minimax point of problem \eqref{minmax} with $X$ and $Y$ being defined in \eqref{gs3}. Then there exist multipliers $\alpha\in \mathbb{R}^p$ and $\beta\in\mathbb{R}^q$ such that
\begin{equation}
\label{FKKT}
\begin{cases}
\nabla_x f(\hat{x},\hat{y}) - \sum_{i=1}^p \alpha_i A_i=0,\\
- \nabla_y f(\hat{x},\hat{y}) - \sum_{j=1}^q \beta_j C_j=0,\\
\alpha \in \mathcal{N}_{\mathbb{R}_+^p}(b - A\hat{x}),\\
\beta \in \mathcal{N}_{\mathbb{R}_+^q}(d - C\hat{y}).
\end{cases}
\end{equation}

If, moreover, $f$ is twice continuously differentiable, we have, for any $\delta>0$, that
\begin{equation}
\label{SKKT}
\begin{cases}
\inp{v, \nabla_{xx}^2 f(\hat{x},\hat{y}) v} \geq 0 ~\text{for all}\\
~~~~~v\in \left\{ \lambda \in \mathcal{T}_X(\hat{x}): \exists\delta>0, ~ \lambda^\top \nabla_x f(\hat{x},y')=0 ~\text{for}~ y'\in \mathbb{B}(\hat{y},\delta) \right\},\\
\inp{w, \nabla_{yy}^2 f(\hat{x},\hat{y})w} \leq 0
~\text{for all}~w\in \left\{ \mu \in \mathcal{T}_Y(\hat{y}): \mu^\top \nabla_y f(\hat{x},\hat{y})=0 \right\}.
\end{cases}
\end{equation}
\end{theorem}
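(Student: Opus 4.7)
The plan is to derive the stated KKT system by specializing the geometric optimality conditions of Theorem \ref{Th1} to the polyhedral setting \eqref{gs3}, and translating all tangent/normal cone memberships into algebraic form via Proposition \ref{Prop6} and Lemma \ref{Lem1}. Throughout, the hypotheses of Theorem \ref{Th1} are satisfied because $f$ is (twice) continuously differentiable and the polyhedral sets $X,Y$ are in particular closed and convex.

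First, for the first-order system \eqref{FKKT}, I would apply Theorem \ref{Th1}(i) to obtain the inclusions
\[
0\in \nabla_x f(\hat{x},\hat{y}) + \mathcal{N}_X(\hat{x}),\qquad 0\in -\nabla_y f(\hat{x},\hat{y}) + \mathcal{N}_Y(\hat{y}).
\]
Proposition \ref{Prop6} then gives $\mathcal{N}_X(\hat{x})=\{-\sum_{i=1}^p \alpha_i A_i:\alpha\in\mathcal{N}_{\mathbb{R}_+^p}(b-A\hat{x})\}$ and analogously for $\mathcal{N}_Y(\hat{y})$. Substituting these expressions directly produces multipliers $\alpha\in\mathbb{R}^p$ and $\beta\in\mathbb{R}^q$ satisfying the four relations in \eqref{FKKT}. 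This step is essentially bookkeeping.

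For the second-order system \eqref{SKKT}, I would invoke Theorem \ref{Th1}(ii), which gives
\begin{equation*}
\inp{v,\nabla_{xx}^2 f(\hat{x},\hat{y})v}\geq 0\ \text{for}\ v\in \cl\{\bar v:\exists\delta>0,\ \bar v\in \Gamma_1^\circ(\hat{x},y'),\ \forall y'\in\mathbb{B}(\hat{y},\delta)\},
\end{equation*}
together with the analogous inequality for $w$ on $\cl\Gamma_2^\circ(\hat{x},\hat{y})$. Since $X$ and $Y$ are polyhedral, Lemma \ref{Lem1} guarantees $\Gamma_1^\circ(\hat{x},y')=\Gamma_1(\hat{x},y')$ and $\Gamma_2^\circ(\hat{x},\hat{y})=\Gamma_2(\hat{x},\hat{y})$, and the latter is closed (it is a polyhedral cone intersected with a hyperplane), so the closure on the right is redundant. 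Translating the definition of $\Gamma_1$ then yields
\[
\{v:\exists\delta>0,\ v\in\mathcal{T}_X(\hat{x}),\ v^\top\nabla_x f(\hat{x},y')=0\ \forall y'\in\mathbb{B}(\hat{y},\delta)\},
\]
which is exactly the index set appearing in \eqref{SKKT}; the corresponding statement for $w$ is obtained identically from $\Gamma_2(\hat{x},\hat{y})$. At this point Proposition \ref{Prop6} provides the algebraic description $\mathcal{T}_X(\hat{x})=\{\lambda:-A_i^\top\lambda\geq 0,\ i\in\mathcal{A}_X(\hat{x})\}$ (and analogously for $\mathcal{T}_Y(\hat{y})$), which is implicit in the form of \eqref{SKKT}.

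The main obstacle I anticipate is verifying that the rather awkward set $\cl\{\bar v:\exists\delta>0,\ \bar v\in\Gamma_1^\circ(\hat{x},y'),\ \forall y'\in\mathbb{B}(\hat{y},\delta)\}$ really coincides (up to closure) with the set that appears inside \eqref{SKKT}. The subtlety is that the quantifier ``$\exists\delta>0$'' is on the outside, so the set is a union over $\delta$ of intersections over $y'\in\mathbb{B}(\hat{y},\delta)$, and one must check that the closure commutes with this union/intersection structure in a way compatible with polyhedrality. I would handle this by noting that the inner set is closed as an intersection of closed sets (polyhedral cone $\mathcal{T}_X(\hat{x})$ with hyperplanes $\{v:v^\top\nabla_x f(\hat{x},y')=0\}$), and that by the continuity of $\nabla_x f(\hat{x},\cdot)$ the conditions for $y'=\hat{y}$ are automatic when they hold on a neighborhood, so the statement is consistent with the existence formulation in \eqref{SKKT}. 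The remaining inequalities follow verbatim from Theorem \ref{Th1}.
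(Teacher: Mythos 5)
Your proposal is correct and follows exactly the route the paper intends: the paper omits the proof entirely, stating only that Theorem \ref{Th3} ``directly follows from Lemma \ref{Lem1}, Theorem \ref{Th1} and Proposition \ref{Prop6},'' which is precisely the specialization you carry out. The closure issue you flag as the main obstacle is in fact a non-issue for the implication needed, since the set in \eqref{SKKT} (after identifying $\Gamma_1^\circ$ with $\Gamma_1$ via polyhedrality) is contained in the closed set of Theorem \ref{Th1}(ii), so the inequality transfers immediately.
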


The proof of Theorem \ref{Th3} directly follows from Lemma \ref{Lem1}, Theorem \ref{Th1} and Proposition \ref{Prop6}. Thus, we neglect it here.

We call \eqref{FKKT} the first-order KKT system of problem \eqref{minmax} (see also \cite[Proposition 1.2.1]{FP2007finite} for the KKT system for variational inequalities with polyhedral sets) and \eqref{FKKT}-\eqref{SKKT} the second-order KKT system of problem \eqref{minmax}.

\begin{definition}
We call that $(\hat{x},\hat{y})\in X\times Y$ is a first-order stationary point of problem \eqref{minmax} if it satisfies \eqref{gs2-1}-\eqref{gs2-2}. Moreover, if $(\hat{x},\hat{y})$ also satisfies \eqref{gs6-1}-\eqref{gs6-2}, we call it a second-order stationary point of problem \eqref{minmax}.
\end{definition}

The existence results of the first-order stationary points can be obtained by using existing results in \cite[Proposition 2.2.3, Corollary 2.2.5]{FP2007finite}.
Let $F(x,y)=\begin{pmatrix}
\nabla_x f(x,y)\\
- \nabla_y f(x,y)
\end{pmatrix}$.
\begin{enumerate}
\item[(i)] If there exits a bounded open set $\mathcal{Z}\subseteq X\times Y$ and a point $(\bar{x},\bar{y})\in (X\times Y)\cap \mathcal{Z}$ such that
    $$\inp{F(x,y),\begin{pmatrix}x-\bar{x}\\y-\bar{y}\end{pmatrix}}\geq 0,~ \forall (x,y)\in (X\times Y)\cap \bd(\mathcal{Z}),$$
    then problem \eqref{minmax} has at least a first-order stationary point.

\item[(ii)] Specially, if $X$ and $Y$ are bounded, the first-order stationary point set of problem \eqref{minmax} is nonempty.
\end{enumerate}

We know from \cite[Proposition 21]{JNJ2020local} that a global minimax point can be neither a local minimax point nor a stationary point. However, some global minimax points can be the first-order stationary points.

The following proposition claims that under mild conditions a class of global minimax points are first-order stationary points.

\begin{proposition}
\label{Prop2}
Let $f$ be continuously differentiable over $X\times Y$, and $(\hat{x},\hat{y})$ be a global minimax point of \eqref{minmax} satisfying
\begin{equation*}
\hat{y} \in \limsup_{x\to \hat{x}} \left(\argmax_{y'\in Y} f(x,y') \right),
\end{equation*}
where ``\,$\limsup$'' denotes outer limit (\cite[Definition 4.1]{RW2009variational}), then $(\hat{x},\hat{y})$ is a first-order stationary point.
\end{proposition}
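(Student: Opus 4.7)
The plan is to verify the two inclusions forming the first-order stationarity system separately. The inclusion $0 \in -\nabla_y f(\hat{x},\hat{y}) + \mathcal{N}_Y(\hat{y})$ is immediate: the global minimax property gives $f(\hat{x},y) \leq f(\hat{x},\hat{y})$ for every $y \in Y$, so $\hat{y}$ is a global maximizer of the smooth function $f(\hat{x},\cdot)$ over the convex set $Y$, and the classical first-order necessary condition yields $\nabla_y f(\hat{x},\hat{y})^\top(y-\hat{y}) \leq 0$ for all $y \in Y$, which is equivalent to $\nabla_y f(\hat{x},\hat{y}) \in \mathcal{N}_Y(\hat{y})$.

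For the inclusion $0 \in \nabla_x f(\hat{x},\hat{y}) + \mathcal{N}_X(\hat{x})$, the outer-limit hypothesis is essential. Unpacking it produces sequences $x^k \in X$ with $x^k \to \hat{x}$ and $y^k \in \argmax_{y' \in Y} f(x^k,y')$ with $y^k \to \hat{y}$. Since the global minimax property forces $\hat{x}$ to minimize the envelope $\varphi$ on $X$, one has $\varphi(x^k) \geq \varphi(\hat{x})$; continuity of $f$ combined with $\varphi(x^k) = f(x^k,y^k)$ delivers $\varphi(x^k) \to f(\hat{x},\hat{y}) = \varphi(\hat{x})$, so $\varphi$ is continuous at $\hat{x}$ along this particular sequence.

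To extract the $x$-stationarity, I will fix an arbitrary $\bar{x} \in X$ and work with the convex combinations $z^k_s := (1-s) x^k + s \bar{x} \in X$ for small $s > 0$. The minimizer property gives $\varphi(z^k_s) \geq \varphi(\hat{x})$. An upper bound is obtained by selecting $y^k_s \in \argmax_{y' \in Y} f(z^k_s,y')$, using $f(x^k, y^k_s) \leq f(x^k, y^k) = \varphi(x^k)$ because $y^k$ is a maximizer at $x^k$, and Taylor-expanding $f(\cdot, y^k_s)$ between $x^k$ and $z^k_s$; this produces $\varphi(z^k_s) \leq \varphi(x^k) + s\, \nabla_x f(\xi^k_s, y^k_s)^\top(\bar{x} - x^k)$ for some intermediate $\xi^k_s$. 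Combining the two bounds and rearranging gives $\varphi(\hat{x}) - \varphi(x^k) \leq s\, \nabla_x f(\xi^k_s, y^k_s)^\top(\bar{x} - x^k)$, whose left-hand side tends to $0$ as $k \to \infty$, so $\liminf_{k \to \infty} \nabla_x f(\xi^k_s, y^k_s)^\top(\bar{x}-x^k) \geq 0$ for each fixed small $s$.

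The main obstacle I anticipate is the double passage to the limit: after $k \to \infty$ one is left with an inequality involving accumulation points of the argmax selectors $y^k_s$ at base points converging to $\hat{x} + s(\bar{x}-\hat{x})$, and the remaining step $s \downarrow 0$ must propagate these back to $\hat{y}$. A diagonal extraction combined with the outer-limit hypothesis on $\hat{y}$ is what I expect to guarantee that the accumulated maximizers stay close to $\hat{y}$, so that continuity of $\nabla_x f$ ultimately yields $\nabla_x f(\hat{x},\hat{y})^\top(\bar{x}-\hat{x}) \geq 0$ for every $\bar{x} \in X$, equivalent to the target inclusion $0 \in \nabla_x f(\hat{x},\hat{y}) + \mathcal{N}_X(\hat{x})$.
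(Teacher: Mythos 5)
Your treatment of the $y$-part is correct and coincides with the paper's: the first half of the global minimax inequality makes $\hat{y}$ a global maximizer of $f(\hat{x},\cdot)$ over $Y$, whence $\nabla_y f(\hat{x},\hat{y})\in\mathcal{N}_Y(\hat{y})$. The $x$-part, however, has a genuine gap exactly at the point you flag as the ``main obstacle'', and it is not one that a diagonal extraction can repair. After letting $k\to\infty$ at fixed $s$, your inequality involves $\nabla_x f$ evaluated at cluster points of the selectors $y^k_s\in\argmax_{y'\in Y}f(z^k_s,y')$, i.e.\ at maximizers attached to the perturbed base points $z^k_s\to(1-s)\hat{x}+s\bar{x}$. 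The hypothesis $\hat{y}\in\limsup_{x\to\hat{x}}\bigl(\argmax_{y'\in Y}f(x,y')\bigr)$ gives no control over these: it only asserts the existence of \emph{one} sequence $x^k\to\hat{x}$ whose maximizers converge to $\hat{y}$; it does not say that maximizers at nearby points must cluster at $\hat{y}$, nor that the outer limit is the singleton $\{\hat{y}\}$ (in Example \ref{Eg2} the outer limit at $\hat{x}=0$ contains both $\pi$ and $-\pi$). Consequently, as $s\downarrow 0$ you can only conclude $\nabla_x f(\hat{x},\tilde{y})^\top(\bar{x}-\hat{x})\ge 0$ for \emph{some} cluster point $\tilde{y}$ of maximizers, possibly different from $\hat{y}$ and possibly depending on $\bar{x}$; the desired inequality at $\hat{y}$ does not follow.

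The paper's proof never introduces maximizers at perturbed points. It takes the sequence $(x^k,\tilde{y}^k)$ supplied by the hypothesis and uses the chain $f(x^k,\tilde{y}^k)=\varphi(x^k)\ge f(\hat{x},\hat{y})\ge f(\hat{x},\tilde{y}^k)$ (the two inequalities being precisely the two halves of the global minimax property), so that $0\le f(x^k,\tilde{y}^k)-f(\hat{x},\tilde{y}^k)$; it then Taylor-expands in $x$ only, at the fixed second argument $\tilde{y}^k$, for which the convergence $\tilde{y}^k\to\hat{y}$ \emph{is} guaranteed by hypothesis. This yields $0\le\nabla_x f(\hat{x},\hat{y})^\top(x^k-\hat{x})+o(\norm{x^k-\hat{x}})$ and hence $-\nabla_x f(\hat{x},\hat{y})\in\mathcal{N}_X(\hat{x})$. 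To make your envelope-based argument work you would need the argmax map to have outer limit $\{\hat{y}\}$ at $\hat{x}$ (for instance, single-valuedness near $\hat{x}$), which is essentially the setting of Proposition \ref{Prop3} rather than the hypothesis of Proposition \ref{Prop2}.
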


\begin{proof}
Since $(\hat{x},\hat{y})$ is a global minimax point, we have
\begin{equation}\label{Gminimax}
f(\hat{x},y) \overset{(a)}{\leq} f(\hat{x},\hat{y})  \overset{(b)}{\leq} \max_{y'\in Y}f(x,y').
\end{equation}
The inequality (a) of \eqref{Gminimax} implies \eqref{gs2-2}. In the sequel, we only consider \eqref{gs2-1} through inequality (b) of \eqref{Gminimax}.
Since
$$\hat{y} \in \limsup_{x\to \hat{x}} \left(\argmax_{y'\in Y} f(x,y') \right),$$
without loss of generalization, we assume that there exists a sequence $\tilde{y}^k\in \mathcal{Y}^k$ such that $\tilde{y}^k \to  \hat{y}$.
By a similar procedure to the proof for (i) of Theorem \ref{Th1}, we have
\begin{align*}
0 &\leq \nabla_x f(\hat{x}, \tilde{y}^k)^\top (x^k-\hat{x})  + o\left(\norm{x^k-\hat{x}}\right)\\
&= \nabla_x f(\hat{x}, \hat{y})^\top (x^k-\hat{x}) + (\nabla_x f(\hat{x}, \tilde{y}^k) - \nabla_x f(\hat{x}, \hat{y}) )^\top (x^k-\hat{x})  + o\left(\norm{x^k-\hat{x}}\right)\\
&= \nabla_x f(\hat{x}, \hat{y})^\top (x^k-\hat{x})   + o\left(\norm{x^k-\hat{x}}\right),
\end{align*}
which implies that $-\nabla_x f(\hat{x}, \hat{y}) \in \mathcal{N}_X(\hat{x})$.
\end{proof}

In general, a global minimax point can be neither a local minimax point nor a stationary point \cite[Proposition 21]{JNJ2020local}. We give the following example to show this.

\begin{example}[{\cite[Figure 2]{JNJ2020local}}]\label{Eg2}
Let $n=m=1$, $X = [-1,1]$ and $Y = [-5,5]$. Consider the following minimax problem
\begin{equation}
\label{eg-gs1}
\min_{x\in[-1,1]}\max_{y\in [-5,5]}~f(x,y):=  xy-\cos(y).
\end{equation}

By direct calculation, we have
\begin{align*}
\varphi(x) &= \max_{y\in[-5,5]} (xy-\cos(y)) \\
&=
\begin{cases}
x\cdot( \pi -  \arcsin(-x)) - \cos(\pi-\arcsin(-x)), & x\in[0,1];\\
x\cdot( -\pi -  \arcsin(-x)) - \cos(-\pi-\arcsin(-x)), & x\in[-1,0],
\end{cases}
\end{align*}
where the optima is achieved when $y= \pi - \arcsin(-x)$ and $y= -\pi - \arcsin(-x)$, respectively. It can observe from the definition of $\varphi(x)$ that $x=0$ is the minimizer. In this case, $(0,-\pi)$ and $(0,\pi)$ are two global minimax points. However, they both fail to satisfy \eqref{gs2-1}-\eqref{gs2-2}, that is,
\begin{equation*}
  \begin{cases}
  0\in  x + \sin(y) + \mathcal{N}_{[-1,1]}(x),\\
  0\in y + \mathcal{N}_{[-5,5]}(y),
  \end{cases}
\end{equation*}
which has a unique solution $(0,0)$. Thus, neither $(0,-\pi)$ nor $(0,\pi)$ is a first-order stationary point, which implies from Theorem \ref{Th1} that they cannot be local minimax points either. Therefore, a global minimax point can be neither a local minimax point nor a first-order stationary point.

Next, we show that even $(0,0)$ is not a local minimax point. For any $y$ satisfying $0<\abs{y}\leq \delta$ with any sufficiently small $\delta>0$, we have
$$-\cos(y)=f(0,y) > f(0,0)=-1,$$
which, according to the definition of local minimax points in Definition \ref{Def4}, concludes that $(0,0)$ is not a local minimax point. Therefore, problem \eqref{eg-gs1} here does not have a local minimax point even both $X$ and $Y$ are bounded.
\end{example}

Sometimes we can find that a global minimax point may be a stationary point (Example \ref{Eg1}). In the following proposition, we conclude some sufficient conditions such that a global minimax point is a local minimax point.

\begin{proposition}
\label{Prop3}
Let $(\hat{x},\hat{y})$ be a global minimax point and $f$ be Lipschitz continuous over $X\times Y$. If $\max_{y'\in Y} f(x,y')$ has a unique solution for all $x$ in a neighborhood of $\hat{x}$, then $(\hat{x},\hat{y})$ is a local minimax point.
\end{proposition}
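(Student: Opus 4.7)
My plan is to verify Definition~\ref{Def4} directly by turning the hypothesized unique maximizer into the function $\tau$. Let $U$ be a neighborhood of $\hat x$ on which $\arg\max_{y'\in Y}f(x,y')$ is a singleton, and write $y^*(x)$ for that unique element. Since $(\hat x,\hat y)$ is a global minimax point, $\hat y\in\arg\max_{y'\in Y}f(\hat x,y')$, and uniqueness forces $y^*(\hat x)=\hat y$. I would then set
$$\tau(\delta):=\sup_{x\in X\cap U,\;\|x-\hat x\|\leq\delta}\|y^*(x)-\hat y\|,$$
which is monotone and nonnegative with $\tau(0)=0$; after possibly majorizing by a continuous envelope it also satisfies the continuity/monotonicity convention of the remark after Definition~\ref{Def4}.

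With this $\tau$, the left inequality in Definition~\ref{Def4} follows from the global minimax inequality $f(\hat x,y)\leq f(\hat x,\hat y)$ valid for every $y\in Y$. For the right inequality, take any $x\in X\cap U$ with $\|x-\hat x\|\leq\delta$; then $\|y^*(x)-\hat y\|\leq\tau(\delta)$, so $y^*(x)$ is feasible in the restricted max over $\{y'\in Y:\|y'-\hat y\|\leq\tau(\delta)\}$, and
$$\max_{y'\in Y,\;\|y'-\hat y\|\leq\tau(\delta)}f(x,y')\;\geq\; f(x,y^*(x))\;=\;\varphi(x)\;\geq\; f(\hat x,\hat y),$$
the last inequality being the global minimax property at the point $(\hat x,\hat y)$. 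Hence, once we know $\tau(\delta)\to 0$ as $\delta\to 0$, the proof is complete.

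The substantive step is therefore to show $y^*(x)\to\hat y$ as $x\to\hat x$. Given $x_k\to\hat x$ in $X\cap U$, set $y_k:=y^*(x_k)$. The Lipschitz continuity of $f$ with constant $L$ makes $\varphi$ Lipschitz with the same constant, so $\varphi(x_k)\to\varphi(\hat x)=f(\hat x,\hat y)$, while $|f(\hat x,y_k)-f(x_k,y_k)|\leq L\|x_k-\hat x\|$ gives $f(\hat x,y_k)\to f(\hat x,\hat y)$. Since the global minimax inequality forces $f(\hat x,y_k)\leq f(\hat x,\hat y)$, the sequence $(y_k)$ is maximizing for $f(\hat x,\cdot)$; any cluster point $\bar y\in Y$ of $(y_k)$ attains the maximum, and uniqueness of the maximizer at $\hat x$ identifies $\bar y=\hat y$, whence $y_k\to\hat y$.

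The main obstacle is the precompactness of $(y_k)$: the cluster-point argument above uses that convergent subsequences exist, which is automatic when $Y$ is bounded but not in general. I expect this to be handled by a Tikhonov-style well-posedness argument (or by using the finiteness/attainment of the envelope together with uniqueness to force maximizing sequences of $f(\hat x,\cdot)$ to stay in a bounded set), after which continuity of $y^*$ at $\hat x$ and hence $\tau(\delta)\to 0$ follow and the verification of Definition~\ref{Def4} is complete.
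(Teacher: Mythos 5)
Your argument is essentially the paper's: the paper likewise sets $\tau(\delta):=\sup_{\{x\in X:\,\|x-\hat x\|\leq\delta\}}\|\bar y(x)-\hat y\|$ for the unique maximizer $\bar y(x)$ and concludes through the same chain of inequalities, the only difference being that the continuity of $\bar y(\cdot)$ at $\hat x$ --- the one step you leave open --- is obtained there by citing \cite[Lemma 4.1]{X2010uniform} instead of your direct maximizing-sequence argument. The precompactness issue you flag is genuine but is precisely what that cited lemma absorbs (and is automatic when $Y$ is bounded), so your proof matches the paper's modulo that reference.
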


\begin{proof}
Since $\max_{y'\in Y} f(x,y')$ has a unique solution for all $x$ in a neighborhood of $\hat{x}$, we use $\bar{y}(x)$ to denote this unique solution. By \cite[Lemma 4.1]{X2010uniform}, we know that
$$\norm{\bar{y}(x) - \hat{y}} \to 0 ~\text{as}~x\to \hat{x},$$
which implies that there exists a $\delta_0>0$ such that for any $x\in X$ satisfying $\norm{x-\hat{x}} \leq \delta\leq \delta_0$, $\tau(\delta)\to 0$ where $\tau(\delta):=\sup_{\{x\in X:\norm{x-\hat{x}} \leq \delta\}}\norm{\bar{y}(x) - \hat{y}}$. As $(\hat{x},\hat{y})$ is a global minimax point, we have for any $x\in X$ and $y\in Y$ that
\begin{equation*}
f(\hat{x},y) \leq f(\hat{x},\hat{y})  \leq \max_{y'\in Y}f(x,y').
\end{equation*}
This indicates that for $x$ satisfying $\norm{x-\hat{x}} \leq \delta (\leq \delta_0)$ and $y$ satisfying $\norm{y - \hat{y}} \leq \tau(\delta)$, we have
\begin{equation*}
f(\hat{x},y) \leq f(\hat{x},\hat{y})  \leq \max_{y'\in Y}f(x,y')= f(x,\bar{y}(x)) = \max_{y'\in \{y\in Y: \norm{y - \hat{y}} \leq \tau(\delta)\}}f(x,y').
\end{equation*}
Thus, $(\hat{x},\hat{y})$ is a local minimax point based on Definition \ref{Def4}.
\end{proof}

Obviously, when $f(x,\cdot)$ is strictly concave for all $x$ in a neighborhood of $\hat{x}$, the  condition for the uniqueness of the maximization problem holds.

To end this section, we summarize relationships between saddle points, local saddle points, global minimax points, local minimax points and first-order and second-order stationary points in Figure \ref{fig-3}.
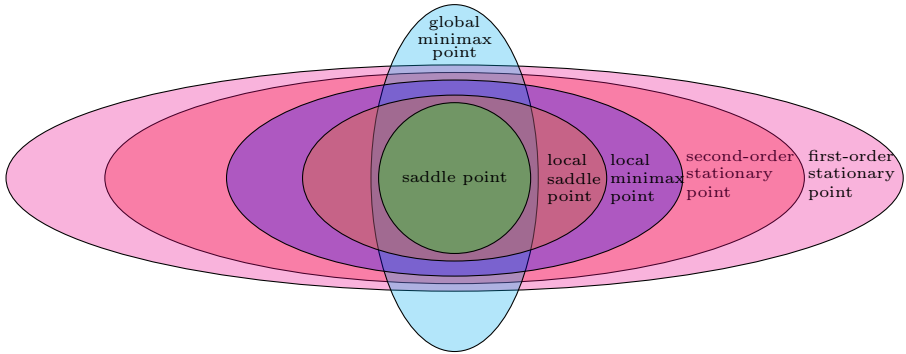
\begin{figure}[htp]
\begin{center}
\begin{tikzpicture}
\draw[fill=red,fill opacity=0.3] (0,0) ellipse (4.6cm and 1.4cm);
\draw (0.1,0.3)node[right=80pt,black,fill opacity=1]{\tiny{second-order}};
\draw (0.1,0.05)node[right=80pt,black,fill opacity=1]{\tiny{stationary}};
\draw (0.1,-0.2)node[right=80pt,black,fill opacity=1]{\tiny{point}};

\draw[fill=magenta,fill opacity=0.3] (0,0) ellipse (5.9cm and 1.5cm);
\draw (1.35,0.3)node[right=90pt,black,fill opacity=1]{\tiny{first-order}};
\draw (1.35,0.05)node[right=90pt,black,fill opacity=1]{\tiny{stationary}};
\draw (1.35,-0.2)node[right=90pt,black,fill opacity=1]{\tiny{point}};

\draw[fill=cyan,fill opacity=0.3] (0,0) ellipse (1.1cm and 2.3cm);
\draw (0,2.05) node[black,fill opacity=1]{\tiny{global}};
\draw (0,1.85) node[black,fill opacity=1]{\tiny{minimax}};
\draw (0,1.65) node[black,fill opacity=1]{\tiny{point}};

\draw[fill=blue,fill opacity=0.3] (0,0) ellipse (3cm and 1.3cm);
\draw (-0.2,0.25) node[right=60pt,black,fill opacity=1]{\tiny{local}};
\draw (-0.2,0) node[right=60pt,black,fill opacity=1]{\tiny{minimax}};
\draw (-0.2,-0.25) node[right=60pt,black,fill opacity=1]{\tiny{point}};

\draw[fill=orange,fill opacity=0.3] (0,0) ellipse (2cm and 1.1cm);
\draw (-0.5,0.25) node[right=45pt,black,fill opacity=1]{\tiny{local}};
\draw (-0.5,0) node[right=45pt,black,fill opacity=1]{\tiny{saddle}};
\draw (-0.5,-0.25) node[right=45pt,black,fill opacity=1]{\tiny{point}};

\draw[fill=green,fill opacity=0.3] (0,0) circle (1cm) node[black,fill opacity=1]{\tiny{saddle point}};
\end{tikzpicture}
\end{center}
\caption{Venn diagram for saddle points, minimax points and stationary points} \label{fig-3}
\end{figure}

\section{Generative adversarial networks}\label{Sec4}

In this section, we consider the first-order and second -order optimality conditions of the GAN using nonsmooth activation functions, which can be formulated as nonsmooth nonconvex-nonconcave  min-max problem (\ref{minmax}).

The GAN is one of the most popular generative models in machine learning. It is comprised of two ingredients: the generator, which creates samples that are intended to follow the same distribution as the training data, and the discriminator, which examines samples to determine whether they are real or fake. For more motivations and advantages of GANs, one can refer to \cite{G2014generative}. Recently, Wang gave a mathematical introduction to GANs in \cite{W2020mathematical}.

The plain vanilla GAN model can be formulated as \eqref{GAN}, where $D$ and $G$ are given by feedforward neural networks with parameters $x$ and $y$, respectively. Theoretically, the universal approximation theorem tells us that, under certain mild conditions, a feedforward neural network can approximate any Borel measurable function. Moreover, the derivatives of the feedforward neural network can also approximate the derivatives of this function arbitrarily well. The activation function is a function from $\mathbb{R}$ to $\mathbb{R}$ that is used to compute the hidden layer values and introduce the nonlinear property. There are several commonly-used activation functions, such as ReLU $\sigma(z)=\max\{0,z\}$, the logistic sigmoid $\sigma(z) = 1/(1+ \exp(-z))$, the softplus activation function $\sigma(z) = \ln(1+ \exp(z))$, etc.

We give an intuition for $D$ and $G$ which are consist of linear models with activation functions in the following example.

\begin{example}\label{Eg4}
Consider that the discriminator $D$ is a single-layer network with a logistic sigmoid activation function \cite{GLWM2020limiting} and the generator $G$ is a two-layer network with an activation function  $\sigma$ as follows
\begin{align*}
G(x,\xi_2):= W_2 \bm{\sigma} (W_1\xi_2+b_1) + b_2,   \quad \quad
D(y,\xi_1):= \frac{1}{1+ \exp(y^\top\xi_1 )},
\end{align*}
where $x=(\mathrm{vec}(W_1)^\top, \mathrm{vec}(W_2)^\top,b_1^\top,b_2^\top)^\top$ and $\mathrm{vec}(\cdot)$ denotes the columnwise vectorization operator of matrices, $W_1\in \mathbb{R}^{s \times s_2}$, $b_1\in \mathbb{R}^{s }$, $W_2\in \mathbb{R}^{s_1\times s}$, $b_2\in \mathbb{R}^{s_1}$ and $\bm{\sigma}:\mathbb{R}^s\to \mathbb{R}^s$.
  Here, $\bm{\sigma}$ is a separable vector activation function that aggregates the individual neuron activations in the layer.

In this case, the GAN model \eqref{GAN} can be explicitly written as
\begin{equation}
\label{GAN-2}
\begin{aligned}
\min_{x \in X} \max_{y \in Y}~  &f(x,y)=\Bigg( \mathbb{E}_{P_{1}}\left[ \log\left( \frac{1}{1+ \exp(y^\top \xi_1)} \right) \right]  \\
&+  \mathbb{E}_{P_2}\left[\log\left( 1 - \frac{1}{1+ \exp(y^\top (W_2 \bm{\sigma} (W_1\xi_2+b_1) + b_2) )} \right)\right] \Bigg).
\end{aligned}
\end{equation}
If $X$ and $Y$ are compact and $\bm{\sigma}$ is continuous, by Proposition \ref{Prop1}, problem (\ref{GAN-2}) has a global minimax point.
\end{example}

Obviously, if $D(\cdot,\xi_1)$ and $G(\cdot,\xi_2)$ are smooth (i.e. $\bm{\sigma}$ is smooth), the necessary optimality conditions in Theorem \ref{Th1} hold. Next, we focus on the nonsmooth case with the ReLU activation function.

\begin{proposition}\label{Prop8}
Let $f$ be defined in (\ref{GAN-2})  with $\bm{\sigma}(\cdot)=(\cdot)_+$. Assume that support sets $\Xi_1$ and $\Xi_2$ are bounded. Then the following statements hold.
\begin{enumerate}
\item[(i)] $f$ is locally Lipschitz continuous and twice semidifferentiable in $X\times Y$.
\item[(ii)] In addition, if $\xi_2$ is a continuous random variable, $\Xi_2$ has a nonempty interior and $\|(W_1)_i\|+|(b_1)_i|>0$,  $i=1,\cdots,s$, then $f$ is twice continuously differentiable at $(x,y)$.
\end{enumerate}
\end{proposition}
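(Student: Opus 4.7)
The plan is to decompose $f(x,y) = f_1(y) + f_2(x,y)$, where
$$f_1(y) = \mathbb{E}_{P_{1}}\!\left[-\log\bigl(1+\exp(y^\top\xi_1)\bigr)\right]$$
and
$$f_2(x,y) = \mathbb{E}_{P_{2}}\!\left[\log\!\left(1-\frac{1}{1+\exp(y^\top G(x,\xi_2))}\right)\right], \quad G(x,\xi_2)=W_2(W_1\xi_2+b_1)_+ + b_2.$$
The integrand of $f_1$ is $C^\infty$ in $y$ with derivatives bounded uniformly in $\xi_1\in\Xi_1$, so dominated convergence gives $f_1\in C^\infty(Y)$; the entire argument concentrates on $f_2$, where the only source of nonsmoothness is the ReLU appearing in $G$.

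For part (i), let $h_2(x,y,\xi_2)$ denote the integrand of $f_2$. Example \ref{Eg3} shows that $x\mapsto G(x,\xi_2)$ is locally Lipschitz and twice semidifferentiable at every point. Since $(u,y)\mapsto y^\top u$ is $C^\infty$ and $t\mapsto \log(1-1/(1+\exp(t)))$ is $C^\infty$, invoking the chain rule via the expansion form of semiderivatives in Lemma \ref{Lem9} yields that $h_2(\cdot,\cdot,\xi_2)$ is locally Lipschitz and twice semidifferentiable on $X\times Y$ for every $\xi_2$, with Lipschitz constants and second-order remainders that are uniformly controlled as $\xi_2$ ranges over the bounded set $\Xi_2$. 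Dominated convergence then allows me to pass the expectation through the $\liminf$ defining $\d h_2$ and $\d^2 h_2$ (and through the limits in the definition of twice semidifferentiability), transferring both properties from the integrand to $f_2$.

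For part (ii), fix $(x_0,y_0)$ satisfying $\|(W_1)_i\|+|(b_1)_i|>0$ for every $i=1,\dots,s$, and define
$$N(x,y) := \bigcup_{i=1}^{s}\bigl\{\xi_2\in\mathbb{R}^{s_2}: (W_1)_i^\top\xi_2 + (b_1)_i = 0\bigr\}.$$
Under the hypothesis, each set in the union is either a proper affine hyperplane in $\mathbb{R}^{s_2}$ (when $(W_1)_i\ne 0$) or empty (when $(W_1)_i=0$ and hence $(b_1)_i\ne 0$), so $N(x,y)$ has Lebesgue measure zero. The continuity of $\xi_2$ combined with the nonempty-interior hypothesis on $\Xi_2$ ensures $P_2$ is absolutely continuous with respect to Lebesgue measure on an open set carrying all of its mass, whence $P_2(N(x_0,y_0))=0$. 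For every $\xi_2\notin N(x_0,y_0)$ the sign pattern $\mathrm{sign}\bigl((W_1)_i^\top\xi_2+(b_1)_i\bigr)$ is locally constant in $(x,y)$ near $(x_0,y_0)$, so $h_2(\cdot,\cdot,\xi_2)$ is $C^\infty$ on a neighborhood of $(x_0,y_0)$; closed-form expressions for its first and second derivatives, together with boundedness of the sigmoid and its derivatives and of $\xi_2\in\Xi_2$, furnish integrable uniform bounds. Dominated convergence now permits me to differentiate under the expectation twice, and a final dominated-convergence argument using the pointwise-a.e.\ continuity of $\nabla^2 h_2(\cdot,\cdot,\xi_2)$ at $(x_0,y_0)$ delivers continuity of $\nabla^2 f_2$.

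The principal obstacle is this last interchange in part (ii): the ``bad set'' $N(x,y)$ depends on $(x,y)$, and its union over a neighborhood of $(x_0,y_0)$ can have positive Lebesgue measure, so one cannot uniformly restrict $\xi_2$ to a single smooth region. The fix is to work fiberwise in $\xi_2$: for $P_2$-almost every $\xi_2$ the activation pattern is locally \emph{constant} in $(x,y)$ near $(x_0,y_0)$, so $h_2(\cdot,\cdot,\xi_2)$ is genuinely $C^\infty$ there, and it is precisely this almost-sure fiberwise smoothness that dominated convergence promotes to twice continuous differentiability of $f$ at $(x_0,y_0)$.
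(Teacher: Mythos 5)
Your proof is correct and follows essentially the same route as the paper: the same decomposition into $\rho_1(y)+\rho_2(x,y)$, the appeal to Example \ref{Eg3} for local Lipschitz continuity and twice semidifferentiability in part (i), and for part (ii) the observation that the activation-kink set is a finite union of hyperplanes of $P_2$-measure zero off of which the integrand is smooth with locally constant activation pattern. The only cosmetic difference is that you justify the interchange of differentiation and expectation by a direct dominated-convergence argument, whereas the paper cites \cite[Theorem 7.57]{SDR2014lectures}, which encapsulates the same fiberwise a.e.-smoothness-plus-domination reasoning.
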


\begin{proof}
Let
\begin{align*}
\rho_1(y)=\mathbb{E}_{P_{1}}\left[ \log\left( D(y, \xi_1) \right) \right],~ \rho_2(x,y)= \mathbb{E}_{P_2}\left[\log\left( 1 - D(y, G(x,\xi_2))\right)\right].
\end{align*}
(i) Since for any fixed  $\xi_2\in \Xi_2$, $G(x,\xi_2)$ and
$\log\left( 1 - \frac{1}{1+ \exp(y^\top G(x,\xi_2))} \right)$ are locally Lipschitz continuous in $X\times Y$,  the local Lipschitz continuity of $f(x,y)=\rho_1(y) + \rho_2( x,y)$ follows the continuous differentiability of \emph{log} and \emph{exp} functions. Moreover, the twice semidifferentiability follows directly from Example \ref{Eg3}.

(ii) Obviously, $f$ is twice continuously differentiable with respect to $y$. In what follows, we only consider the continuous differentiability of $f$ with respect to $x$. Since $\Xi_2$ has a nonempty interior and $\xi_2$ is a continuous random variable, we know that $P_2(\{\xi_2: (W_1\xi_2+b_1)_i=0\}) = 0$. Thus, $\Xi_0:=\cup_{i=1}^s\{\xi_2: (W_1\xi_2+b_1)_i=0\}$ is $P_2$-zero. Note that $\log\left( 1 - D(y, G(x,\xi_2))\right)$ is continuously differentiable with respect to $x$ for every $\xi_2\in \Xi_2\backslash \Xi_0$. Since $P_2(\Xi_0)=0$, we have
\begin{align*}
\rho_2(x,y) = \int_{\Xi_2\backslash \Xi_0 } \log\left( 1 - D(y, G(x,\xi_2))\right) P_2(\d\xi_2).
\end{align*}
According to \cite[Theorem 7.57]{SDR2014lectures}, we have $\int_{\Xi_2\backslash \Xi_0 } \log\left( 1 - D(y, G(x,\xi_2))\right) P_2(\d\xi_2)$ is continuously differentiable. Thus, $\rho_2(x,y)$ is continuously differentiable and
\begin{align*}
\nabla_x \rho_2(x,y) &= \nabla_x \int_{\Xi_2\backslash \Xi_0 } \log\left( 1 - D(y, G(x,\xi_2))\right) P_2(\d\xi_2)\\
&= \int_{\Xi_2\backslash \Xi_0 } \nabla_x  \log\left( 1 - D(y, G(x,\xi_2))\right) P_2(\d\xi_2).
\end{align*}
By using \cite[Theorem 7.57]{SDR2014lectures} again, we have $\nabla_x\rho_2(x,y)$ is continuously differentiable at $(x,y)$. Thus, $f$ is twice continuously differentiable at $(x,y)$.
\end{proof}

By directly using Proposition \ref{Prop8}, we can apply Theorems \ref{Th5} and \ref{Th1} to problem \eqref{GAN-2}.

\begin{proposition}
Suppose the assumptions of Proposition \ref{Prop8} hold. Let $(\hat{x},\hat{y})$ be a local minimax point of problem \eqref{GAN-2}.  Then the first-order necessary optimality conditions \eqref{NonS1st-1}-\eqref{NonS1st-2} hold at $(\hat{x},\hat{y})$. If, in addition, $\xi_2$ is a continuous random variable, $\Xi_2$ has a nonempty interior and $\|(W_1)_i\|+|(b_1)_i|>0$,  $i=1,\cdots,s$, then the first-order and second-order necessary optimality conditions  \eqref{gs2-1}-\eqref{gs2-2} and \eqref{gs6-1}-\eqref{gs6-2} hold at $(\hat{x},\hat{y})$.
\end{proposition}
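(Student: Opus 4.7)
The plan is to read the proposition as a direct corollary of the three preceding results: Proposition \ref{Prop8} supplies the regularity of $f$, and then Theorems \ref{Th5} and \ref{Th1} convert that regularity into the first-order and second-order conditions on $(\hat{x},\hat{y})$. I would present the proof as two paragraphs, one for each claim, and keep the GAN-specific calculations hidden inside the invocations of Proposition \ref{Prop8}.

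For the first claim, I would first note that Proposition \ref{Prop8}(i) implies $f$ is locally Lipschitz continuous and semidifferentiable on $X\times Y$ (twice semidifferentiability trivially implies semidifferentiability). In particular $f$ is semidifferentiable at $(\hat{x},\hat{y})$, so the hypothesis of Theorem \ref{Th5}(i) is met. Applying Theorem \ref{Th5}(i) to the local minimax point $(\hat{x},\hat{y})$ yields \eqref{NonS1st-1}--\eqref{NonS1st-2} directly. No further computation is required; the work is entirely in Proposition \ref{Prop8}(i) and in Theorem \ref{Th5}(i) itself.

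For the second claim, the strategy is to invoke Proposition \ref{Prop8}(ii) to upgrade the regularity to twice continuous differentiability and then apply Theorem \ref{Th1}. The only subtlety is that Theorem \ref{Th1}(ii) implicitly needs $f$ to be twice continuously differentiable \emph{in a neighborhood} of $(\hat{x},\hat{y})$, since its proof uses a second-order Taylor expansion at perturbed points. This is where I expect the main obstacle to lie. To handle it, I would observe that the conditions assumed, namely that $\xi_2$ is continuous with $\Xi_2$ of nonempty interior and $\|(W_1)_i\|+|(b_1)_i|>0$ for $i=1,\dots,s$, are open in $x$: continuity of the map $x\mapsto(W_1(x),b_1(x))$ implies that the strict inequality $\|(W_1)_i\|+|(b_1)_i|>0$ persists on a neighborhood of $\hat{x}$, while the conditions on $\xi_2$ and $\Xi_2$ are independent of $(x,y)$. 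Hence Proposition \ref{Prop8}(ii) applies to every point in some open neighborhood of $(\hat{x},\hat{y})$, giving twice continuous differentiability of $f$ on that neighborhood.

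With that upgrade in hand, Theorem \ref{Th1}(i) and (ii) can be applied to the local minimax point $(\hat{x},\hat{y})$, producing \eqref{gs2-1}--\eqref{gs2-2} and \eqref{gs6-1}--\eqref{gs6-2}, respectively. The proof is therefore a short composition of Proposition \ref{Prop8} with Theorems \ref{Th5} and \ref{Th1}, with the only nontrivial point being the observation that the genericity conditions on $(W_1,b_1)$ and on $\xi_2$ are stable under small perturbations of $x$, so that pointwise twice continuous differentiability at $(\hat{x},\hat{y})$ actually gives the neighborhood regularity needed by Theorem \ref{Th1}.
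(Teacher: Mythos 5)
Your proposal is correct and matches the paper, which gives no separate proof but simply states that the result follows by applying Proposition \ref{Prop8} together with Theorems \ref{Th5} and \ref{Th1}. Your extra observation that the condition $\|(W_1)_i\|+|(b_1)_i|>0$ is open in $x$, so that twice continuous differentiability actually holds on a neighborhood as Theorem \ref{Th1}(ii) requires, is a careful point the paper leaves implicit but does not change the route.
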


In the remainder of this section, we show how to verify whether a pair $(x,y)\in X\times Y$ is a first-order or second-order stationary point of problem \eqref{GAN-2} in Example \ref{Eg4} by Proposition \ref{Prop8}. To this end, we assume  $\|(W_1)_i\|+|(b_1)_i|>0$,  $i=1,\cdots,s$, $\xi_1, \xi_2$ are  continuous random variables with bounded support sets, and $\Xi_2$ has a nonempty interior.
Let $X=[a, b]$ and $Y=[c, d]$ where $a, b \in \mathbb{R}^n,
c, d\in\mathbb{R}^m$, $a < b$, and $c<d$ with $n=(s+1)(s_1+s_2)$ and $m=s_1$.

Since $\|(W_1)_i\|+|(b_1)_i|>0$,  $i=1,\cdots,s$, we can generate independent identically distributed (iid) samples $\{\xi_1^j\}_{j=1}^N$ of $\xi_1$ and $\{\xi_2^j\}_{j=1}^N$ of $\xi_2$ such that $(W_1\xi_2^j+b_1)_i\neq 0$ for $i=1,\cdots,s$ and $j=1,\cdots,N$.
We consider the following min-max problem
\begin{equation}
\label{SAA-GAN-2}
\begin{aligned}
\min_{x\in X}\max_{y\in Y}~\hat{f}_N(x,y):= &\frac{1}{N}\sum_{i=1}^N \Bigg( \log\left( \frac{1}{1+ \exp(y^\top \xi_1^i)} \right) \\
&+  \log\left( 1 - \frac{1}{1+ \exp(y^\top (W_2 (W_1\xi_2^i+b_1)_+ + b_2) )} \right) \Bigg).
\end{aligned}
\end{equation}

Use the existing automatic differentiation technique, such as back-propagation, we can compute
$\nabla_x \hat{f}_N(x,y)$,  $\nabla_y \hat{f}_N(x,y)$, $\nabla_{xx}^2 \hat{f}_N(x,y)$,  $\nabla_{yy}^2 \hat{f}_N(x,y)$. Moreover,
we have
\begin{equation*}
\mathcal{T}_X(x) = \mathcal{T}_X^\circ(x)= \left\{ v \in \mathbb{R}^n \, :\,
v_i \in\begin{cases}
[0, \infty), & \text{if}~x_i = a_i\\
(-\infty, \infty), & \text{if}~a_i < x_i <b_i\\
(-\infty, 0], & \text{if}~x_i = b_i
\end{cases}\right\},
\end{equation*}
\begin{equation*}
\mathcal{T}_Y(y) = \mathcal{T}_Y^\circ(y)= \left\{ w \in \mathbb{R}^m \, :
\,  w_j \in \begin{cases}
[0, \infty), & \text{if}~y_j = c_j\\
(-\infty, \infty), & \text{if}~c_j < y_j <d_j\\
(-\infty, 0], & \text{if}~y_j = d_j
\end{cases}\right\}
\end{equation*}
and
\begin{align*}
\Gamma_1^\circ(x,y)&=\Gamma_1(x,y)=\{v\in \mathcal{T}_X(x): v\bot \nabla_x \hat{f}_N(x,y)\},\\
\Gamma_2^\circ(x,y)&=\Gamma_2(x,y)=\{w\in \mathcal{T}_Y(y): w\bot \nabla_y \hat{f}_N(x,y)\}.
\end{align*}
By Theorem \ref{Th1}, if $(\hat{x},\hat{y})$ is a local minimax point of problem (\ref{SAA-GAN-2}), then $(\hat{x},\hat{y})$ must satisfy the first-order and second-order optimality  conditions:
$$\begin{cases}(\nabla_x \hat{f}_N(\hat{x},\hat{y}))_i\ge 0,
 & \text{if}~x_i = a_i;\\
(\nabla_x \hat{f}_N(\hat{x},\hat{y}))_i=0, & \text{if}~a_i < x_i <b_i;\\
(\nabla_x \hat{f}_N(\hat{x},\hat{y}))_i\le 0, & \text{if}~x_i = b_i
\end{cases}
~\text{and}~
\begin{cases}(\nabla_y \hat{f}_N(\hat{x},\hat{y}))_j\le 0,
 & \text{if}~y_j = c_j;\\
(\nabla_y \hat{f}_N(\hat{x},\hat{y}))_j=0, & \text{if}~c_j < y_j <d_j;\\
(\nabla_y \hat{f}_N(\hat{x},\hat{y}))_j\ge 0, & \text{if}~y_j = d_j
\end{cases}
$$
for $i=1,\ldots, n$, $j=1,\ldots, m$,  and
\begin{align*}
&\inp{v, \nabla_{xx}^2 \hat{f}_N(\hat{x},\hat{y}) v} \geq 0~\text{for all}~v\in \left\{\bar{v}: \exists \delta>0, \bar{v} \in \Gamma_1(\hat{x},y'), \forall y'\in \mathbb{B}(\hat{y},\delta) \right\},\\
&\inp{w, \nabla_{yy}^2 \hat{f}_N(\hat{x},\hat{y}) w} \leq 0~\text{for all}~w\in \Gamma_2(\hat{x},\hat{y}).
\end{align*}

The following proposition tells that the above procedures can ensure an exponential rate of convergence with respect to sample size $N$.

\begin{proposition}Suppose the assumptions of Proposition \ref{Prop8} hold.
 If $(x_N, y_N)$ is a first-order (or second-order stationary point) of problem \eqref{SAA-GAN-2} with iid samples $ \{\xi_1^j\}_{j=1}^N$ and $\{\xi_2^j\}_{j=1}^N$ of $\xi_1$ and $\xi_2$ respectively,  then $(x_N, y_N)$ converges to a first-order (or second-order stationary point) of problem \eqref{GAN-2} exponentially with respect to $N$.
\end{proposition}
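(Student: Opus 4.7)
The plan is to reduce the claim to two ingredients: (i) uniform exponential convergence of the SAA gradients and Hessians to those of $f$, and (ii) a compactness/closedness argument that transfers this quantitative control onto the stationary-point set of \eqref{GAN-2}. Throughout let $S^{\star}$ denote the set of first-order (respectively, second-order) stationary points of \eqref{GAN-2} in the compact box $X\times Y$.

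First I would establish the uniform large-deviation bound
$$\Pr\!\Big(\sup_{(x,y)\in X\times Y}\|\nabla \hat{f}_N(x,y)-\nabla f(x,y)\|>\varepsilon\Big)\le C(\varepsilon)\,e^{-N c(\varepsilon)}$$
and its analogue for $\|\nabla^2 \hat f_N-\nabla^2 f\|_\infty$, valid for every $\varepsilon>0$ with constants $C(\varepsilon),c(\varepsilon)>0$. This follows from \cite[Theorem 7.67]{SDR2014lectures} applied to the random maps $(x,y)\mapsto \nabla_{(x,y)}\log D(y,\xi_1)$ and $(x,y)\mapsto \nabla_{(x,y)}\log\bigl(1-D(y,G(x,\xi_2))\bigr)$, together with their Hessians: since $\Xi_1,\Xi_2$ are bounded and $X,Y$ are compact, the arguments $y^\top\xi_1$ and $y^\top(W_2(W_1\xi_2+b_1)_{+}+b_2)$ range over a bounded set, so on the full-measure smooth locus identified in Proposition \ref{Prop8}(ii) the integrands and their derivatives are bounded and Lipschitz continuous in $(x,y)$ uniformly in $\xi$. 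In particular, their moment generating functions are finite in a neighbourhood of the origin, which is exactly the hypothesis required to invoke the uniform Cram\'er-type bound.

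Next I would pass to the limit in the SAA KKT system. Because $X=[a,b]$ and $Y=[c,d]$ are boxes, the tangent and normal cones are piecewise constant on faces and are outer semicontinuous in the base point, and by Lemma \ref{Lem1} the critical cones satisfy $\Gamma_i^{\circ}=\Gamma_i$. For any cluster point $(\hat x,\hat y)$ of $\{(x_N,y_N)\}$, uniform convergence of $\nabla \hat f_N$ to $\nabla f$ combined with outer semicontinuity of $\mathcal N_X,\mathcal N_Y$ shows that $(\hat x,\hat y)$ satisfies \eqref{gs2-1}-\eqref{gs2-2}; similarly, uniform Hessian convergence and outer semicontinuity of the critical cones yield \eqref{gs6-1}-\eqref{gs6-2}. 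Hence $S^{\star}$ is nonempty and closed, and a standard compactness/contradiction argument produces for every $\eta>0$ a threshold $\delta(\eta)>0$ such that on the event
$$\mathcal{E}_N(\eta):=\{\|\nabla\hat f_N-\nabla f\|_\infty\le \delta(\eta)\}\cap\{\|\nabla^2\hat f_N-\nabla^2 f\|_\infty\le \delta(\eta)\}$$
every first-order (respectively, second-order) stationary point of \eqref{SAA-GAN-2} lies within distance $\eta$ of $S^{\star}$. Combining with the large-deviation bound of the preceding step,
$$\Pr(\dist((x_N,y_N),S^{\star})>\eta)\le \Pr(\mathcal{E}_N(\eta)^c)\le \tilde C(\eta)\,e^{-N \tilde c(\eta)},$$
which is the advertised exponential convergence.

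The main obstacle lies in the second-order case: the feasibility cone appearing in \eqref{gs6-1}, namely $\cl\{v:\exists\delta>0,\,v\in\Gamma_1^{\circ}(\hat x,y'),\,\forall y'\in\mathbb{B}(\hat y,\delta)\cap Y\}$, depends on the gradient map throughout a \emph{neighbourhood} of $\hat y$ and can shrink under perturbation, so transferring SAA second-order feasibility to feasibility for $f$ at the exponential rate requires care. In the polyhedral box setting, however, the active index sets of $\mathcal T_X,\mathcal T_Y$ stabilise near $(\hat x,\hat y)$, reducing the orthogonality constraint $v\perp \nabla_x f(\hat x,y')$ on the relevant neighbourhood to a finite linear system whose coefficients converge uniformly at the rate of the first step; any SAA critical direction therefore differs from a critical direction for $f$ by an $O(\|\nabla\hat f_N-\nabla f\|_\infty)$ perturbation, which, substituted into the uniformly convergent Hessian inequality, yields the limit second-order condition. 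The first-order part is immediate once the uniform exponential gradient estimate is in place.
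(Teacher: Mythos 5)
Your proposal follows essentially the same route as the paper: uniform exponential (large-deviation) convergence of the SAA gradients and Hessians from the Shapiro--Dentcheva--Ruszczy\'nski uniform LLN, followed by a stability argument transferring residual smallness into distance to the stationary set. The paper packages the stability step via growth functions $\psi_i$ and their inverses (citing the convergence analysis of \cite{CSS2019convergence}) rather than your compactness/contradiction construction of $\delta(\eta)$, but these are the same argument; if anything, you are more explicit than the paper about the delicate dependence of the second-order critical cone on a neighbourhood of $\hat y$.
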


\begin{proof}
Denote
\begin{align*}
h(z)=
\begin{pmatrix}
\nabla_x f(x,y)\\
-\nabla_y f(x,y)
\end{pmatrix}, \quad \quad
H(z)=
\begin{pmatrix}
\sup_{v\in \mathcal{V}(x,y)}\inp{v, -\nabla_{xx}^2 f(x,y) v}\\
\sup_{w\in \mathcal{W}(x,y)}\inp{w, \nabla_{yy}^2 f(x,y) w}
\end{pmatrix},\\
\hat{h}_N(z)=
\begin{pmatrix}
\nabla_x \hat{f}_N(x,y)\\
-\nabla_y \hat{f}_N(x,y)
\end{pmatrix}, \quad \quad
\widehat{H}_N(z)=
\begin{pmatrix}
\sup_{v\in \mathcal{V}(x,y)}\inp{v, -\nabla_{xx}^2 \hat{f}_N(x,y) v}\\
\sup_{w\in \mathcal{W}(x,y)}\inp{w, \nabla_{yy}^2 \hat{f}_N(x,y) w}
\end{pmatrix},
\end{align*}
where $z=(x^\top,y^\top)^\top$, $\mathcal{V}(x,y):=\mathbb{B}(0,1) \cap  \cup_{\delta>0} \cl \left\{\bar{v}: \exists \delta>0, \bar{v} \in \Gamma_1^\circ(x,y'), \forall y'\in \mathbb{B}(y,\delta) \right\}$ and $\mathcal{W}(x,y):=\mathbb{B}(0,1) \cap \cl\Gamma_2^\circ(x,y)$.

According to the twice continuous differentiability of $f$ (see Proposition \ref{Prop8}) and the boundedness of $\Xi_1$ and $\Xi_2$, we have $\hat{h}_N(z) \to h(z)$ and $\widehat{H}_N(z) \to H(z)$ exponentially fast uniformly in any compact subset of $\mathcal{Z}\subseteq Z:=X\times Y$ (\cite[Theorem 7.73]{SDR2014lectures}). That is, for any given $\epsilon >0$, there exist $C=C(\epsilon)$ and $\beta=\beta(\epsilon)$, such that
\begin{equation*}
\prob\left\{ \sup_{z\in \mathcal{Z}}\norm{\hat{h}_N(z) - h(z)} \geq \epsilon \right\} \leq C e^{-N\beta} ~\text{and}~ \prob\left\{ \sup_{z\in \mathcal{Z}}\abs{\widehat{H}_N(z)- H(z)} \geq \epsilon \right\} \leq C e^{-N\beta}.
\end{equation*}
Without loss of generality, we assume that $z_N=(x_N^\top, y_N^\top)^\top\in \mathcal{Z}$. Denote the following general growth functions:
\begin{align*}
\psi_1(\tau)&:=\inf\{\d(0, h(z) + \mathcal{N}_Z(z)): z\in \mathcal{Z},\,  \d(z, \mathcal{S}_1)\geq \tau \},\\
\psi_2(\tau)&:=\inf\{\norm{(H(z))_+}: z\in \mathcal{Z}, \, \d(z, \mathcal{S}_2)\geq \tau \},
\end{align*}
where $\mathcal{S}_1$ and $\mathcal{S}_2$ are the sets satisfying \eqref{gs2-1}-\eqref{gs2-2} and \eqref{gs6-1}-\eqref{gs6-2}, respectively, and ``$\d$'' denotes the distance from a point to a set. Let the related functions
\begin{equation*}
\Psi_1(t):=\psi_1^{-1}(t) + t ~\text{and}~ \Psi_2(t):=\psi_2^{-1}(t) + t,
\end{equation*}
where $\psi_i^{-1}(t):=\sup\{\tau: \psi_i(\tau) \leq \eta\}$ for $i=1,2$, which satisfy $\Psi_i(t) \to 0$ as $t\downarrow 0$ for $i=1,2$.

Then, by a conventional discussion (see e.g. \cite{CSS2019convergence}), we have
\begin{equation*}
\d(z_N, \mathcal{S}_1) \leq  \Psi_1\left( \sup_{z\in Z}\norm{\hat{h}_N(z) - h(z)} \right)~\text{and}~\d(z_N, \mathcal{S}_2) \leq  \Psi_2\left( \sup_{z\in Z}\abs{\widehat{H}_N(z)- H(z)} \right).
\end{equation*}
Thus, we have
\begin{equation*}
\prob\left\{ \d(z_N, \mathcal{S}_1) \geq \Psi_1(\epsilon) \right\} \leq C e^{-N\beta} ~\text{and}~ \prob\left\{ \d(z_N, \mathcal{S}_2) \geq \Psi_2(\epsilon) \right\} \leq C e^{-N\beta},
\end{equation*}
which shows that $z_N$ converges to a first-order stationary point in $\mathcal{S}_1$ (or a first-order stationary point in $\mathcal{S}_2$)  exponentially with respect to $N$.
\end{proof}

\section{Conclusions}\label{Sec5}

Many nonconvex-nonconcave min-max problems in dada sciences do not have saddle points. In this paper, we provide sufficient conditions for the existence of global and local minimax points of constrained nonsmooth nonconvex-nonconcave min-max problem (\ref{minmax}). Moreover, we give the first-order and second-order optimality conditions of local minimax points of problem (\ref{minmax}), and use these conditions to define the first-order and second-order stationary points of (\ref{minmax}). The relationships between saddle points, local saddle points, global minimax points, local minimax points, stationary points are summarized in Figure \ref{fig-3}. Several examples are employed to illustrate our theoretical results. To demonstrate applications of these optimality conditions in deep learning, we propose a method to verify the optimality conditions at any given point of generative adversarial network (\ref{GAN-2}).


\end{document}